\documentclass[preprint,12pt]{elsarticle}

\usepackage{enumerate}
\usepackage{amssymb}
\usepackage{lipsum}
\usepackage[a4paper, total={6.0in, 8.4in}]{geometry}
\usepackage{booktabs}
\usepackage{amsmath,amssymb,url}
\usepackage{enumitem} 
\usepackage{graphics} 
\usepackage{array}
\usepackage{dsfont}
\usepackage[all]{xy}
\usepackage{amsthm}
\usepackage{tikz}

\numberwithin{equation}{section}
\newtheorem{theorem}{Theorem}[section]
\newtheorem{lemma}[theorem]{Lemma}
\newtheorem{proposition}[theorem]{Proposition}
\newtheorem{corollary}[theorem]{Corollary}

\newtheorem{problem}[theorem]{Problem}

\newtheorem{definition}[theorem]{Definition}

\newtheorem{remark}[theorem]{Remark}
\newtheorem{example}[theorem]{Example}

\newcommand{\dn}{\mathord{\downarrow}\hspace{0.05em}}
\newcommand{\up}{\mathord{\uparrow}\hspace{0.05em}}

\newcommand{\dda}{\mathord{\Uparrow}\hspace{0.05em}}
\newcommand\blfootnote[1]{%
\begingroup
\renewcommand\thefootnote{}\footnote{#1}%
\addtocounter{footnote}{-1}%
\endgroup
}

\journal{Topology and its Applications}

\begin{document}

\begin{frontmatter}



\title{Applications of sequential spaces to Smyth power spaces and sober spaces}


\author{Zhengmao He}
\address{School of Sciences, Southwest Petroleum University,Chengdu 610500, China}
\begin{abstract} Recently, in the study of topological properties in Domain theory, the first countable spaces and Fr\'echet-Urysohn spaces receive close attention and are frequently employed. In this paper, we focus on sequential spaces weaker than Fr\'echet-Urysohn spaces. The main results are:

(1) For a well-filtered space $X$, the Scott topology and the upper vietoris topology on $K(X)$ coincide if $P_{S}(X)$ is a sequential space;

(2) every $\omega$-well-filtered coherent $d$- space $X$ is sober when $X\times X$ is a sequential space.
\end{abstract}
\begin{keyword} well-filtered space; Scott topology; upper vietoris topology; sequential space; sober space\\
\vspace*{0.2cm}
{\em Mathematics Subject Classification:} 54B10; 06B35; 06F30
\end{keyword}


\end{frontmatter}
\blfootnote{This work is supported by the National Natural Science Foundation of China (Grant no.12471438) and the Sichuan Science and Technology Program (Grant No. 2026NSFSC0784).}
\blfootnote{E-mail address: hezhengmaomath@163.com (Z.M.He).}


\section{Introduction}
\label{}

Domain theory was initiated by Dana Scott to construct denotational models for the untyped \(\lambda\)-calculus. By introducing partial orders to model information ordering, domains provide mathematical spaces for higher-order computations, where higher-order function spaces are again domains. Three major powerdomain constructions (Smyth powerdomain, Hoare powerdomain and Plotkin powerdomain) model angelic, demonic and convex nondeterminism for concurrent and nondeterministic programs. For a $T_{0}$ space $X$, its Smyth powerdomain $K(X)$ consists of all nonempty compact saturated subsets of $X$, ordered by the reverse inclusion. On
$K(X)$, there are two important topologies, namely the Scott topology and the upper vietoris topology. The Vietoris topology is a highly useful topology with many desirable properties. For example, a base space $X$ is sober (respectively, well-filtered) if and only if its
$K(X)$ equipped with the upper vietoris topology is sober (respectively, well-filtered)(\cite{GG03,EEE20}). The upper vietoris topology is contained in the Scott topology When $X$ is a well-filtered space(\cite{GG03}). It is natural to investigate the conditions under which these two topologies on $K(X)$ coincide when $X$ is a well-filtered space. In (\cite{EEE21}), Schalk was the first to establish that the two topologies on $K(X)$  coincide whenever $X$ is a locally compact sober space. Subsequently, Brecht and Kawai extended this result to the case where $X$ is a second-countable sober space(\cite{EEF1}). Recently, it has been observed that the notions of first countability and the Fr\'echet-Urysohn property are particularly useful in the study of topological properties of $T_{0}$ spaces, leading to several noteworthy results. In particular, every first-countable well-filtered space is sober(\cite{EEF22}); the well-filtered reflection of a first-countable space coincides with its sobrification(\cite{EEF25}); and an
$\omega$-well-filtered space $X$ is sober whenever its sobrification is Fr\'echet-Urysohn(\cite{EEF24}). Furthermore, if the Smyth power space of every well-filtered space $X$ is first-countable (respectively, Fr\'echet-Urysohn), then the Scott topology on
$K(X)$ coincides with the upper vietoris topology(\cite{EEF24,EEF30}). In this paper, we proceed to investigate conditions under which the two topologies on $K(X)$ coincide. Sequential spaces play a central role in our investigation. In Section 3, we show that, for a well-filtered space $X$, if its Smyth power space is sequential, then the Scott topology on $K(X)$ coincides with the Upper vietoris topology.

Sober spaces establish important connections among point-set topology, order-theoretic structures, and domain-theoretic semantics. Recently, many significant results have been obtained in the study of sober spaces. Miao,Xi,Jia,Li and Zhao present two Scott sober dcpos whose product is not sober for the Scott topology(\cite{EEF31}). Subsequently, Xi, Sheng and Zhao proved that the complete Boolean algebra of all regular open subsets of the real line is not sober with respect to the Scott topology(\cite{EEF32}). These counterexamples offer a new perspective on the conditions under which well-filtered spaces are sober. In 2025, we established that a $\omega$-well-filtered coherent $d$-space $X$ is sober whenever its product $X\times X$ is a Fr\'echet-Urysohn space(\cite{EEF33}). In section 4, we prove that the conclusion remains valid if the assumption that $X\times X$ is a Fr\'echet-Urysohn space is weakened to the assumption that $X\times X$ is a sequential space.

\section{Preliminaries}
\label{}

\quad Let $P$ be a poset and $A\subseteq P$. Set $$\up A=\{x\in P\mid \exists\ a\in A, a\leq x \}$$ and $$\dn A=\{x\in P\mid \exists\ a\in A, x\leq a\}.$$
For every $x\in P$, we write $\dn\{x\}$ as $\dn x$ and $\up\{x\}$ as $\up x$. A nonempty subset $D\subseteq P$ is directed if $\forall\ a,b\in D$, $\up a\cap \up b \cap D\neq\emptyset$. Dually, a nonempty subset $D\subseteq P$ is filtered if $\forall\ a,b\in D$, $\dn a\cap \dn b \cap D\neq\emptyset$.

\quad A poset $P$ is called a {\em directed complete poset} ({\em dcpo}, for short) if each
directed set of $P$ has a supremum.

\quad Let $P$ be a poset and $x, y\in P$. We say that $x$ is {\em way below} $y$, in symbols $x\ll y$, iff for each directed subset $D\subseteq P$ for which $\bigvee D$ exists, $y\leq\bigvee D$ implies $\up x\cap D\neq\emptyset$.\ If $\dda a=\{b\in P\mid b\ll a\}$ is a directed set and $\bigvee \dda a=a$\ for every \ $a\in P$, we call $P$ a continuous poset.

\quad Let $P$ be a poset. A subset $U\subseteq P$ is called {\em Scott open} (see \cite{GG03,JG13})if

 (i) $U=\up U$, and

 (ii) for each directed subset $D$, $\bigvee D\in U$ implies
$D\cap U\neq \emptyset$, whenever $\bigvee D$ exists.\\
All Scott open sets of $P$ form the Scott topology $\sigma(P)$. We write $\Sigma P$ for $(P,\sigma(P))$.

\quad Let $P,Q$ be dcpos. Then the mapping $f:\Sigma P\longrightarrow \Sigma Q$ is continuous if and only if for each directed set $D\subseteq P$, $f(\bigvee D)=\bigvee\limits_{d\in D}f(d)$.

\quad Let $X$ be a topological space. A nonempty subset $F\subseteq X$ is called {\em irreducible}, if for closed sets $A, B\subseteq X$, $F\subseteq A\cup B$ implies $F\subseteq A$ or $F\subseteq B$. A topological space $X$ is {\em sober} if for every irreducible closed set $A$, there exists a unique $x\in X$ such that $A=\overline{\{x\}}$.

\quad Given a $T_{0}$ space $X$, the {\em specialization order} $\leq$ on $X$ is defined by
$$x\leq y \Longleftrightarrow x\in cl(\{y\}).$$
Unless otherwise stated, throughout
the paper, whenever an order-theoretic concept is mentioned in the context of a $T_{0}$ space $X$, it is to be interpreted with respect to the specialization order on $X$.

\quad Given a topological space $X$, the symbols $\mathcal{O}(X)$ represents the lattice of all open subsets of $X$ ordered by the inclusion.

\quad A topological space $(X,\tau)$ is a {\em $d$-space} if $X$ is a dcpo and for every $U\in\tau$, $U$ is Scott open with respect to the specialization order of $X$.

\quad A subset $A$ of a topological space $X$ is {\em saturated} if $A=\up A$. A topological space $X$ is {\em well-filtered} ({\em $\omega$-well-filtered}) if for each filtered (countably filtered) family $\mathcal{F}$ of compact saturated subsets of $X$ and each open set $U$ of $X$, $\bigcap \mathcal{F}\subseteq U$ implies $F\subseteq U$ for some $F\in\mathcal{F}$.

 \quad For a topological space $X$, we shall use $K(X)$ to denote the poset of all nonempty
compact saturated subsets of $X$ with the reverse inclusion order. The upper Vietoris topology on $K(X)$ is the topology that has $\{\Box U\mid U\in\mathcal{O}(X)\}$ as a base, where $\Box U=\{K\in\mathcal{Q}(X)\mid K\subseteq U\}$. The upper vietoris topological space (or called Smyth powerspace) is denoted by $P_{S}(X)$. Then the specialization order of the upper space $P_{S}(X)$ is the reverse inclusion order.

\quad A topological space $X$ is {\em coherent} if for each pair $K,Q\in K(X)$, $K\cap Q\in K(X)$. A topological space $X$ is {\em locally compact} if for each $U\in \mathcal{O}(X)$ and for each $x\in U$, there is an open set $V$ and a compact subset $K$ such that $x\in V\subseteq K\subseteq U$. A topological space $X$ is said to be  {\em core-compact } if  $\mathcal{O}(X)$ is continuous. It has been known that every locally compact space is core-compact.

\quad A topological space $X$ is a {\em retract} of a topological space $Y$ if there are two continuous maps $f:X\longrightarrow Y$ and $g:Y\longrightarrow X$ such that $g\circ f=id_{X}$.

\section{Applications of sequential spaces to Smyth power spaces}\label{sec:fm}

In this section, using the sequential spaces, we will present a sufficient condition that for a topological space $X$, the upper vietoris topology and the Scott topology on $K(X)$ coincide.

\begin{definition}{\rm (see \cite{RE77}) A topological space $X$ is a Fr\'echet-Urysohn space if for every subset $A$, $x\in \overline{A}$ implies there is a sequence $\{x_{n}\}_{n\in\omega}$ contained in $A$ converging to $x$.}
\end{definition}

\begin{definition}{\rm (see \cite{RE77}) Let $X$ be a topological space. We say $A\subseteq X$ is a sequential closed set if for each sequence $\{x_{n}\}_{n\in\mathbb{N}}$ in $A$, $\{x_{n}\}_{n\in\mathbb{N}}$ converges to $x$ implies $x\in A$. $X$ is called a sequential space if every sequential closed set is closed in $X$.}
\end{definition}

A subset $U$ of a topological space $X$ is sequential open if $X\setminus U$ is sequential closed. Furthermore, $U\subseteq X$ is sequential open iff for every sequence $\{x_{n}\}_{n\in\mathbb{N}}$ converging to $x\in U$, there is a $n_{0}$ such that $\{x_{n}\mid n_{0}\leq n\}\subseteq U$. A space is sequential space iff every sequential open set is open.

\begin{proposition} {\rm Let $X$ be a topological space. If $P_{S}(X)$ is a sequential space, then $X$ is a sequential space.}
\end{proposition}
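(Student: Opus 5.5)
The plan is to show directly that every sequentially closed subset $A$ of $X$ is closed. We use the canonical map $\xi\colon X\to P_{S}(X)$, $\xi(x)=\up x$, which is continuous and in fact an embedding, because $\xi^{-1}(\Box U)=U$ for every open (hence saturated) $U\subseteq X$. Since subspaces of sequential spaces need not be sequential, this embedding alone does not finish the job. We will instead transport $A$ to a suitable subset of $K(X)$, show that this subset is sequentially closed there, and then use the sequentiality of $P_{S}(X)$.

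First note that a sequentially closed $A$ is a lower set. For $a\in A$ and $y\leq a$, the constant sequence $a$ converges to $y$, so $y\in A$. Hence $\up a\cap A\neq\emptyset$ exactly when $a'\leq a$ for some $a'\in A$. Put
$$\mathcal{A}=\{K\in K(X)\mid K\cap A\neq\emptyset\}.$$
This is a lower set of $P_{S}(X)$ in its specialization order (reverse inclusion), and $\xi^{-1}(\mathcal{A})=A$. So if $\mathcal{A}$ is closed in $P_{S}(X)$, then $A$ is closed in $X$ by continuity of $\xi$. Since $P_{S}(X)$ is sequential, it suffices to show that $\mathcal{A}$ is sequentially closed.

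For this, take $K_{n}\in\mathcal{A}$ with $K_{n}\to K$ in $P_{S}(X)$, and pick $a_{n}\in K_{n}\cap A$. Convergence means that for every open $V\supseteq K$ we have $K_{n}\subseteq V$, and hence $a_{n}\in V$, for all large $n$. Two consequences follow:
\begin{itemize}
\item each set $F_{m}=K\cup\up\{a_{n}\mid n\geq m\}$ is compact saturated;
\item the decreasing sequence $F_{m}$ converges to $K$ in $P_{S}(X)$.
\end{itemize}
We must deduce that $K\cap A\neq\emptyset$. Argue by contradiction and suppose $K\subseteq X\setminus A$. Sequential closedness of $A$ says that no subsequence of $(a_{n})$ converges to any point of $K$. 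We then try to build an open $V\supseteq K$ that omits infinitely many $a_{n}$, contradicting the convergence above.

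This last step is the main obstacle. Pointwise, each $k\in K$ has a neighbourhood omitting infinitely many $a_{n}$. Passing from these pointwise neighbourhoods to one open set around all of compact $K$ needs a diagonal or subsequence argument over a finite subcover, and this need not work for uncountable covers without more structure. The first thing I would try is to apply the sequentiality of $P_{S}(X)$ a second time, to the subspace $\{F_{m}\}\cup\{\up a_{n}\}\cup\{K\}$. I would then argue that the sequence $\up a_{n}$ has $K$, or some compact saturated subset of $K$, in the closure of its tails. From this I would extract a subsequence of $(a_{n})$ converging to a point of $K$, which would contradict $K\cap A=\emptyset$.
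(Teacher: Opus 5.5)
Your reduction is sound: a sequentially closed $A$ is a lower set, $\xi^{-1}(\mathcal{A})=A$, and closedness of $\mathcal{A}$ in $P_S(X)$ would give closedness of $A$. But the whole content of the proposition lies in the step you leave open. That step is to show that $a_n\in A$ and $\up a_n\to K$ in $P_S(X)$ force $K\cap A\neq\emptyset$. It does not follow from the sequential closedness of $A$ and the compactness of $K$ alone.

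Take $\beta\mathbb{N}$ and $A=\mathbb{N}$. Then $A$ is sequentially closed, because $\beta\mathbb{N}$ has no nontrivial convergent sequences. Every open $V\supseteq\mathbb{N}^{*}=\beta\mathbb{N}\setminus\mathbb{N}$ has a compact, hence finite, complement inside $\mathbb{N}$. So $\{n\}\to\mathbb{N}^{*}$ in $P_S(\beta\mathbb{N})$, although $\mathbb{N}^{*}\cap A=\emptyset$. This is not a counterexample to the proposition: $P_S(\beta\mathbb{N})$ is not sequential, since for $p\in\mathbb{N}^{*}$ the set $K(\beta\mathbb{N})\setminus\{\{p\}\}$ is sequentially closed but not closed.

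Your passage from pointwise neighbourhoods to one open set around $K$ does work when $K=\up M$ with $M=\{k_1,k_2,\dots\}$ countable. Choose open $U_j\ni k_j$ and infinite $I_j\subseteq I_{j-1}$ with $a_n\notin U_1\cup\dots\cup U_j$ for $n\in I_j$. Compactness gives $K\subseteq U_1\cup\dots\cup U_m$ for some $m$. This open set must contain $a_n$ for all large $n$, which contradicts $I_m$ being infinite. So the missing idea is a use of the sequentiality of $P_S(X)$ that rules out limits like $\mathbb{N}^{*}$, which are not countably generated. Your proposal contains no such argument.

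The repair you sketch does not supply it. Sequentiality is not hereditary, so nothing is known about the subspace $\{F_m\}\cup\{\up a_n\}\cup\{K\}$. Even inside a sequential space, knowing that $\up k$ lies in the closure of every tail of $(\up a_n)$ does not give a subsequence converging to $k$. That condition only says $k$ is a cluster point of $(a_n)$, and extracting a convergent subsequence is a Fr\'echet--Urysohn or first-countability type property. At best you would obtain another limit $\up a_{n_j}\to L$ with $L\subseteq K$, which is your starting situation unless $L$ is principal.

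The paper's own proof does not get around this either. From $\up x\in\overline{\{\up a\mid a\in A\}}$ it directly asserts a sequence $a_n\in A$ with $\up a_n\to\up x$. That inference is the Fr\'echet--Urysohn property of $P_S(X)$, not sequentiality, and it is exactly the non-hereditary issue you correctly flagged about the embedding $\xi$.
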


\begin{proof} Let $A\subseteq X$ be a sequential closed set of $X$ and $x\in \overline{A}$. Then we have $$\up x\in \overline{\{\up a\mid a\in A\}}_{P_{S}(X)}.$$ Since $P_{S}(X)$ is a sequential space, there is a sequence $\{a_{n}\}_{n\in\mathbb{N}}$ in $A$ such that $\{\up a_{n}\}_{n\in\mathbb{N}}$ converges to $\up x$.

Claim: $\{a_{n}\}_{n\in\mathbb{N}}$ converges to $x$.

Let $U\in\mathcal{O}(X)$ and $x\in U$. Then we have $\up x\in\Box U$. So there is a $n_{0}$ such that $\{\up a_{m}\mid n_{0}\leq m\}\subseteq\Box U$ because $\up x\in \overline{\{\up a\mid a\in A\}}_{P_{S}(X)}$. This implies that $\{a_{m}\mid n_{0}\leq m\}\subseteq U$. Thus, $\{a_{n}\}_{n\in\mathbb{N}}$ converges to $x$.

As $A$ is sequential closed, $x\in A$. Therefore, $A$ is closed in $X$ and $X$ is a sequential sapce. \end{proof}

\begin{lemma} {\rm (see \cite{RE77}) Let $X$ be a sequential space and $A\subseteq X$ a closed set. Then the closed subspace $A$ is a sequential space.}
\end{lemma}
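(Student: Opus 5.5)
The plan is to verify the definition directly: take a subset $B\subseteq A$ that is sequentially closed in the subspace $A$, and show that $B$ is closed in $A$. The natural route is to prove that $B$ is in fact sequentially closed in the whole space $X$. The sequentiality of $X$ then makes $B$ closed in $X$, and hence $B=B\cap A$ is closed in $A$.

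First I will compare convergence in the two spaces. Since the open sets of $A$ are exactly the sets $U\cap A$ with $U\in\mathcal{O}(X)$, a sequence $\{x_{n}\}_{n\in\mathbb{N}}$ lying in $A$ converges to a point $x\in A$ in the subspace $A$ if and only if it converges to $x$ in $X$. This follows at once by unwinding the definition of convergence with these open sets.

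Next I show that $B$ is sequentially closed in $X$. Let $\{b_{n}\}_{n\in\mathbb{N}}$ be a sequence in $B$ converging in $X$ to some $x\in X$. The key point is to locate the limit $x$ inside $A$, and this is where the hypothesis that $A$ is closed is needed. If $x\notin A$, then $X\setminus A$ is an open neighbourhood of $x$, so some tail of the sequence lies in $X\setminus A$. This contradicts $b_{n}\in B\subseteq A$, so $x\in A$. By the preceding step, $\{b_{n}\}$ then converges to $x$ in the subspace $A$. Since $B$ is sequentially closed in $A$, we get $x\in B$.

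Finally, because $X$ is a sequential space, $B$ is closed in $X$, and therefore $B$ is closed in $A$. So every sequentially closed subset of $A$ is closed, and $A$ is a sequential space. There is no real obstacle here. The only delicate step is using the closedness of $A$ to ensure that limits of sequences from $A$ stay in $A$, and that is exactly where the hypothesis enters.
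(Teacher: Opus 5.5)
Your proof is correct and complete. The paper gives no proof of this lemma and only cites Engelking. Your argument is the standard one: a set $B\subseteq A$ that is sequentially closed in $A$ is sequentially closed in $X$, because closedness of $A$ forces every limit of a sequence from $B$ to lie in $A$; sequentiality of $X$ then makes $B=B\cap A$ closed in $A$.
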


The following Example 3.5 demonstrates that converse of Proposition 3.3 is not valid.

\begin{example} {\rm Let $X_{0}$ be a uncountable set and $\infty\not\in X_{0}$. Set $X=X\cup\{\infty\}$ and the topology $\tau$ on $X$ given by
$$\tau=\{U\mid U\subseteq X_{0}\}\cup\{X\setminus F\mid F\ \mbox{is a finite subset of}\ X_{0}\}.$$ Then we have the following conclusion:

(1) $X$ is a Fr\'echet-Urysohn space.

Let $A\subseteq X$ and $x_{0}\in \overline{A}\setminus A$. Note that

$$\overline{A}=\left\{
             \begin{array}{ll}
              A &\ \ \infty\not\in A\ \mbox{and}\ A\ \mbox{is finite}\, \\
              \{\infty\}\cup A &\ \ \infty\not\in A\ \mbox{and}\ A\ \mbox{is infinite},\\
              A&\ \ \infty\in A.\
             \end{array}
           \right.$$
Then we have that $x_{0}=\infty$ and $A$ is infinite. So we can fix a infinite countable subset $\{a_{1},a_{2},\cdot\cdot\cdot,a_{n},\cdot\cdot\cdot\}\subseteq A$. $\forall\ U\in\tau$ with $\infty\in U$, $U=X\setminus F$ for some finite subset $F\subseteq X_{0}$. Take $m=max\{s\mid a_{s}\in F\}$. One can check that $\forall\ m\leq k$, $a_{k}\in U$. This means that the sequence $\{a_{n}\}_{n\in\mathbb{N}}$ containing in $A$ converges to $\infty$. Therefore, $X$ is a Fr\'echet-Urysohn space and hence $X$ is a sequential space.

(2) $K(X)=\{A\mid A\ \mbox{is a finite subset of}\ \ X_{0}\}\cup\{B\mid \infty\in B\}$.

Since the subspace $X_{0}$ is a discrete space, the compact subsets containing in $X_{0}$ are exactly all finite sets. Let $C\subseteq X$ with $\infty\in C$. As every open cover of $C$ contains an open set $X\setminus F$ for some finite subset $F$, $C$ is compact. So we can conclude that $$K(X)=\{A\mid A\ \mbox{is a finite subset of}\ \ X_{0}\}\cup\{B\mid \infty\in B\}.$$

Set
 $$\mathcal{H}=\{K\in K(X)\mid \infty\in K\}.$$

(3) $\mathcal{H}$ is closed in $P_{S}(X)$.

Actually, $K(X)\setminus\mathcal{H}=\{K\in K(X)\mid\infty\not\in K\}=\Box X_{0}$. Thus, $\mathcal{H}$ is a closed subset in the vietoris topology.

Set $$\mathcal{A}=\{Q\in\mathcal{H}\mid X_{0}\setminus Q\ \mbox{is countable}\}.$$

(4) $\mathcal{A}$ is a sequential closed subset of the closed subspace $\mathcal{H}$.

Let $\{K_{n}\}_{n\in\mathbb{N}}$ be a sequence in $\mathcal{A}$ converging to $K^{\ast}\in\mathcal{H}$. Then for all $n\in\mathbb{N}$, $X_{0}\setminus K_{n}$ is countable. If $X_{0}\setminus K^{\ast}=\emptyset$, then $K^{\ast}=X\in\mathcal{A}$. If $X_{0}\setminus K^{\ast}\neq\emptyset$, choose a $d\in X_{0}\setminus K^{\ast}$. As $\{K_{n}\}_{n\in\mathbb{N}}$ converges to $K^{\ast}$ and $\Box(X\setminus\{d\})\cap\mathcal{H}$ is open, there is a $N$ such that $\{K_{r}\mid N\leq r\}\subseteq\Box(X\setminus\{d\})\cap\mathcal{H}$. Particularly, $d\not\in K_{N}$ and hence $d\in X_{0}\setminus K_{N}$. By the arbitrariness of $d\in X_{0}\setminus K^{\ast}$, we can conclude that $X_{0}\setminus K^{\ast}\subseteq\bigcup\limits_{n\in\mathbb{N}}(X_{0}\setminus K_{n})$ is still countable. Consequently, $X_{0}\setminus K^{\ast}$ is countable and thus $K^{\ast}\in\mathcal{A}$. Showing that $\mathcal{A}$ is sequential closed in the closed subspace $\mathcal{H}$.

(5) $\mathcal{A}$ is not closed subset of the closed subspace $\mathcal{H}$.

$\forall\ U\in\tau$ with $\Box U\cap\mathcal{H}\neq\emptyset$. There is a finite subset $F_{0}\subseteq X_{0}$ such that $$U=(X_{0}\setminus F)\cup\{\infty\}.$$
Choose a $a\in X_{0}\setminus F_{0}$. Then $\mathcal{A}\cap(\Box U)\cap\mathcal{H}\neq\emptyset$ ($\{\infty\}\cup(X_{0}\setminus(F_{0}\cup\{a\}))\in\mathcal{A}\cap(\Box U)\cap\mathcal{H}$). Hence, $\overline{\mathcal{A}}_{\mathcal{H}}=\mathcal{H}$. However, $\mathcal{A}\neq\mathcal{H}$. Consequently, $\mathcal{A}$ is closed in the closed subspace $\mathcal{H}$.

(6) $P_{S}(X)$ is not a sequential space.

By (5), the subspace $\mathcal{H}$ is not a sequential space. Using Lemma 3.4, $P_{S}(X)$ is also not a sequential space.

}
\end{example}

\begin{lemma} {\rm (see \cite{GG03}) Let $X$ be a well-filtered space. Then $K(X)$ is a dcpo and the upper vietoris topology is contained in the Scott topology on $K(X)$.}
\end{lemma}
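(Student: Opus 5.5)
The lemma has two parts: $K(X)$ is a dcpo, and every upper Vietoris open set is Scott open. Both should follow from well-filteredness through one key fact. If $\{K_i\}_{i\in I}$ is directed in $K(X)$, then under reverse inclusion it is a filtered family of compact saturated sets. I will show that $K=\bigcap_{i\in I}K_i$ is again a nonempty compact saturated set. This makes it the supremum of the family in $K(X)$: it lies below every $K_i$ in the inclusion order, and any $Q\in K(X)$ that is an upper bound satisfies $Q\subseteq K_i$ for all $i$, hence $Q\subseteq K$.

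I will take the steps in this order. \emph{Saturation} is immediate, since an intersection of upper sets is an upper set.

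\emph{Nonemptiness.} If $K=\emptyset$, then $\bigcap_i K_i\subseteq\emptyset$, which is open. Well-filteredness then gives $K_i\subseteq\emptyset$ for some $i$. This contradicts $K_i\neq\emptyset$.

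\emph{Compactness.} Let $\mathcal{U}$ be an open cover of $K$ and put $U=\bigcup\mathcal{U}$. Since $\bigcap_i K_i\subseteq U$, well-filteredness yields some $K_i\subseteq U$. Because $K_i$ is compact, finitely many members of $\mathcal{U}$ cover $K_i$, and hence they cover $K\subseteq K_i$. So $K\in K(X)$ and $K=\bigvee_i K_i$, which shows that $K(X)$ is a dcpo.

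For the second part it suffices to check the basic open sets $\Box U$ with $U\in\mathcal{O}(X)$, because Scott open sets are closed under unions. First, $\Box U$ is an upper set in $K(X)$: if $K\in\Box U$ and $K\leq Q$, meaning $Q\subseteq K$, then $Q\subseteq U$. Second, suppose $\{K_i\}$ is directed and $\bigvee_i K_i=\bigcap_i K_i\in\Box U$. Then $\bigcap_i K_i\subseteq U$, and well-filteredness gives some $K_i\subseteq U$, that is, $K_i\in\Box U$. Hence $\Box U\in\sigma(K(X))$, and the upper Vietoris topology is contained in the Scott topology.

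The only step needing care is compactness of the intersection, specifically the move from "some $K_i\subseteq\bigcup\mathcal{U}$" to a finite subcover. Once that is in place, everything else is a direct application of the definition of well-filteredness.
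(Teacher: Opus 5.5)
Your proof is correct: using well-filteredness to show that $\bigcap_{i}K_i$ is nonempty, compact and saturated, identifying it as the supremum in $(K(X),\supseteq)$, and checking that each basic set $\Box U$ is an upper set that is inaccessible by directed suprema are all sound steps. The paper gives no proof of its own here and simply cites \cite{GG03}; your argument is essentially the standard one from that reference.
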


\begin{lemma}{\rm (see \cite{GG03}) Let $X$ be a $T_{0}$ space and $\{K_{i}\mid i\in I\}\subseteq K(X)$. Then $\bigvee\limits_{i\in I}K_{i}$ exists in $(K(X),\supseteq)$ if and only if
$\bigcap\limits_{i\in I}K_{i}\in K(X)$ and $\bigvee\limits_{i\in I}K_{i}=\bigcap\limits_{i\in I}K_{i}$.}
\end{lemma}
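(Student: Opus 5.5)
We prove the two directions of the equivalence separately; both rest on the fact that the specialization order of $X$ is the order in which ${\up}K = K$ for saturated $K$.

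\emph{Sufficiency.} Suppose $K_0=\bigcap_{i\in I}K_i\in K(X)$. Each $K_i\supseteq K_0$, so $K_0$ is an upper bound of $\{K_i\mid i\in I\}$ in $(K(X),\supseteq)$. If $Q\in K(X)$ is any upper bound, then $Q\subseteq K_i$ for every $i$, hence $Q\subseteq K_0$, i.e.\ $K_0\sqsubseteq Q$. Thus $K_0$ is the least upper bound, and $\bigvee_{i\in I}K_i=\bigcap_{i\in I}K_i$.

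\emph{Necessity.} Suppose $K=\bigvee_{i\in I}K_i$ exists in $K(X)$. Then $K\subseteq K_i$ for all $i$, so $K\subseteq\bigcap_{i\in I}K_i$. For the reverse inclusion, fix $x\in\bigcap_{i\in I}K_i$. The set ${\up}x$ is compact, since any open set containing $x$ contains ${\up}x$. It is saturated and nonempty, so ${\up}x\in K(X)$. Since each $K_i$ is saturated and contains $x$, we have ${\up}x\subseteq K_i$ for every $i$. Hence ${\up}x$ is an upper bound of the family in the reverse-inclusion order, and minimality of $K$ gives ${\up}x\subseteq K$, so $x\in K$. Therefore $\bigcap_{i\in I}K_i=K\in K(X)$, and the supremum equals the intersection.

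The only step needing care is the observation that principal filters ${\up}x$ are elements of $K(X)$ lying below each $K_i$ in inclusion. This is what lets the least upper bound capture every point of the intersection. No well-filteredness is needed; $T_0$ is used only so that the specialization order is a partial order and ${\up}x$ makes sense. If $I=\emptyset$, the convention $\bigcap\emptyset=X$ gives the same argument: the supremum is the least element $X$, which exists if and only if $X$ is compact.
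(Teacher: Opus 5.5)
Your proof is correct and is the standard argument for this result; the paper gives no proof of its own and only cites the lemma. Sufficiency follows directly from the definition of least upper bound in $(K(X),\supseteq)$. Necessity uses the fact that each principal filter $\up x$ with $x\in\bigcap_{i\in I}K_i$ belongs to $K(X)$ and is an upper bound of the family, which forces $x\in\bigvee_{i\in I}K_i$.
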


\begin{theorem}{\rm(see \cite{EEE21,EEF30}) Let $X$ be a well-filtered space. If one of the following conditions hold

(1) $X$ is locally compact;

(2) $P_{S}(X)$ is a  Fr\'echet-Urysohn  space,

then the Scott topology agrees with the Vietoris topology $K(X)$. }

\end{theorem}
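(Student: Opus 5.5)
By Lemma 3.6, the upper Vietoris topology on $K(X)$ is always contained in the Scott topology when $X$ is well-filtered. So in both cases the task is the reverse inclusion: every Scott open $\mathcal{U}\subseteq K(X)$ must be open in $P_{S}(X)$.

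\emph{Case (1), $X$ locally compact.} Take $K\in\mathcal{U}$. Local compactness gives, for each open $U\supseteq K$, a compact saturated $Q$ with $K\subseteq \mathrm{int}\,Q\subseteq Q\subseteq U$. Hence the family
\[
\mathcal{F}=\{Q\in K(X)\mid K\subseteq \mathrm{int}\,Q\}
\]
is filtered under inclusion, and $\bigcap\mathcal{F}=K$ because $K$ is saturated. Read in $(K(X),\supseteq)$, this means $\mathcal{F}$ is directed with supremum $K$, by Lemma 3.7. Since $\mathcal{U}$ is Scott open and $K\in\mathcal{U}$, some $Q\in\mathcal{F}$ lies in $\mathcal{U}$. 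Then
\[
K\in\Box\,\mathrm{int}\,Q\subseteq\ \up Q\subseteq\mathcal{U},
\]
so $\mathcal{U}$ is Vietoris open.

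\emph{Case (2), $P_{S}(X)$ Fr\'echet-Urysohn.} I argue by contradiction. Suppose $K\in\mathcal{U}$ but $K$ lies in the Vietoris closure of $\mathcal{C}=K(X)\setminus\mathcal{U}$.
\begin{enumerate}
\item By the Fr\'echet-Urysohn property there is a sequence $\{K_{n}\}$ in $\mathcal{C}$ converging to $K$ in $P_{S}(X)$.
\item For each $m$ set $Q_{m}=K\cup\bigcup_{n\geq m}K_{n}$. Convergence means every open $U\supseteq K$ contains all but finitely many $K_{n}$. From this, any open cover of $Q_{m}$ has a finite subcover: take finitely many members covering $K$; their union $U$ is open and contains $K$, so it contains $K_{n}$ for all large $n$; cover each of the finitely many remaining $K_{n}$ by finitely many more members. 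Thus $Q_{m}$ is compact, and it is saturated as a union of saturated sets, so $Q_{m}\in K(X)$.
\item The $Q_{m}$ decrease, so they form a directed set in $(K(X),\supseteq)$. Their intersection is $K$: a point $x\notin K$ has $K\subseteq X\setminus\dn x$, and by convergence $X\setminus\dn x$ contains $K_{n}$ for large $n$, so $x\notin Q_{m}$ for large $m$. By Lemma 3.7, $\bigvee_{m}Q_{m}=K\in\mathcal{U}$.
\item Scott openness gives some $Q_{m}\in\mathcal{U}$. Since $Q_{m}\supseteq K_{m}$, we have $Q_{m}\leq K_{m}$ in $K(X)$, and $\mathcal{U}$ is an upper set, so $K_{m}\in\mathcal{U}$. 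This contradicts $K_{m}\in\mathcal{C}$.
\end{enumerate}

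The step I expect to be the main obstacle is step 2, the compactness of the tail union $Q_{m}$. It is the only place where convergence in the upper Vietoris topology is used substantively, and it is what allows a sequence to be converted into a directed family with the right supremum. The rest is routine use of Lemma 3.7 and the Scott-open definition.

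This argument uses only the existence of a sequence from $\mathcal{C}$ converging to a point of $\mathcal{U}$. That is exactly what sequential openness of the complement fails to give. So the same proof shows that every Scott open set is sequentially open in $P_{S}(X)$. This is how I would expect it to extend to the case where $P_{S}(X)$ is only sequential.
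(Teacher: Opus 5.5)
Your proof is correct for both cases. The paper does not prove this statement itself; it only cites Schalk and Xu--Yang.

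Your case (2) is essentially the argument the paper gives for Theorem 3.10:
\begin{itemize}
\item the tail unions $Q_m=K\cup\bigcup_{n\ge m}K_n$ are compact saturated because of convergence in $P_S(X)$;
\item they form a chain in $(K(X),\supseteq)$ with supremum $K$, by Lemma 3.7;
\item Scott openness together with upper-closedness then forces some $K_m$ into $\mathcal{U}$.
\end{itemize}
As you observe, this uses only sequential openness. That is exactly how the paper obtains the stronger Theorem 3.10, which contains case (2) because Fr\'echet--Urysohn spaces are sequential. Your closing step, $K_m\in\mathcal{U}$ contradicting $K_m\in\mathcal{C}$, is stated more accurately than the paper's text, which concludes ``$K\in\mathcal{A}$''.

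For case (1), your argument via the filtered family $\{Q\in K(X)\mid K\subseteq \mathrm{int}\,Q\}$ is the standard one and it is complete. The one step you leave implicit is routine. Under the paper's pointwise definition of local compactness, you get such a $Q$ inside any open $U\supseteq K$ by covering $K$ with finitely many of the open sets $V_x$ and saturating the union of the corresponding compact sets.
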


\begin{corollary}{\rm(see \cite{EEF30}) Let $X$ be a well-filtered first countable topological space. If one of the following conditions hold

(1) $\forall\ K\in K(X)$, $min(K)$ is countable ;

(2) $X$ is metrizable,

then the Scott topology agrees with the Vietoris topology $K(X)$. }

\end{corollary}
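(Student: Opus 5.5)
The plan is to reduce both cases to Theorem 3.8(2). I will show that under either hypothesis $P_{S}(X)$ is first countable. Every first countable space is Fr\'echet-Urysohn: if $K\in\overline{\mathcal{A}}$, pick a member of $\mathcal{A}$ in each set of a decreasing countable neighbourhood base at $K$, and this sequence converges to $K$. Theorem 3.8(2) then gives that the Scott topology and the upper Vietoris topology on $K(X)$ coincide, since $X$ is well-filtered in both cases.

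\emph{Case (1).} Fix $K\in K(X)$ and, for each $x\in X$, a countable neighbourhood base $\{B_{x,n}\mid n\in\mathbb{N}\}$ of open sets.
\begin{enumerate}[label=(\roman*)]
\item First I recall the standard fact that a compact saturated set satisfies $K=\up min(K)$. By Zorn's lemma, every $y\in K$ lies above a minimal element of $K$: a chain in $K$ going downward has a lower bound in $K$. Indeed, if $\{y_i\}$ is such a chain with no lower bound in $K$, the open sets $X\setminus\dn y_i$ cover $K$. A finite subcover then misses the smallest $y_i$ used, a contradiction.
\item Let $\mathcal{B}_K$ be the family of all finite unions $B_{x_1,n_1}\cup\cdots\cup B_{x_k,n_k}$ with $x_i\in min(K)$ and $n_i\in\mathbb{N}$ that contain $K$. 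Since $min(K)$ is countable, $\mathcal{B}_K$ is countable.
\item I claim $\{\Box V\mid V\in\mathcal{B}_K\}$ is a neighbourhood base at $K$ in $P_{S}(X)$. Given open $U\supseteq K$, for each $x\in min(K)$ choose $n_x$ with $B_{x,n_x}\subseteq U$. Every point of $K$ lies above some minimal $x$. Open sets are upper sets, so the sets $B_{x,n_x}$ cover $K$. By compactness finitely many suffice, and their union $V$ lies in $\mathcal{B}_K$ with $K\in\Box V\subseteq\Box U$.
\end{enumerate}
Step (i), justifying $K=\up min(K)$ in a general $T_{0}$ space, is the only point needing care.

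\emph{Case (2).} Let $X$ be metrizable with metric $d$.
\begin{enumerate}[label=(\roman*)]
\item $X$ is Hausdorff, hence sober and therefore well-filtered. The specialization order is discrete, so $K(X)$ consists of all nonempty compact subsets.
\item For $K\in K(X)$ put $V_n=\{y\mid d(y,K)<1/n\}$. Given open $U\supseteq K$, the function $d(\cdot,X\setminus U)$ is continuous and positive on the compact set $K$. (If $U=X$ the claim below is trivial.)
\item Hence this function has a positive minimum $\varepsilon$ on $K$, so $V_n\subseteq U$ whenever $1/n<\varepsilon$. Thus $\{\Box V_n\mid n\in\mathbb{N}\}$ is a countable neighbourhood base at $K$, and $P_{S}(X)$ is first countable.
\end{enumerate}

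In both cases Theorem 3.8(2) then yields the conclusion. I expect the main obstacle to be step (i) of case (1). Everything else is a direct compactness argument.
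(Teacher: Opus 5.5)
Your proof is correct and follows the natural route behind this corollary; the paper itself gives no proof and only cites Xu--Yang. Under (1) or (2) you show that $P_{S}(X)$ is first countable, using either the countable family of finite unions of basic neighbourhoods of points of $min(K)$ (with your Zorn's-lemma justification of $K=\up min(K)$) or the metric neighbourhoods $\{y\mid d(y,K)<1/n\}$, and then apply Theorem 3.8(2), since first countable spaces are Fr\'echet-Urysohn.
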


\begin{theorem} {\rm Let $X$ be a well-filtered space. If $P_{S}(X)$ is a sequential space, then the upper vietoris topology agrees with the Scott topology on $K(X)$.}
\end{theorem}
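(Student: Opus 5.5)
By Lemma 3.6, the upper Vietoris topology is contained in the Scott topology on $K(X)$, and $K(X)$ is a dcpo. It therefore suffices to show that every Scott open $\mathcal{U}\subseteq K(X)$ is open in $P_{S}(X)$. Since $P_{S}(X)$ is sequential, I will only show that $\mathcal{U}$ is sequentially open in $P_{S}(X)$. So let $\{K_{n}\}_{n\in\mathbb{N}}$ be a sequence in $K(X)$ converging to $K\in\mathcal{U}$ in $P_{S}(X)$. The goal is an $n_{0}$ with $K_{m}\in\mathcal{U}$ for all $m\geq n_{0}$.

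The key construction is the set $Q_{n}=K\cup\bigcup_{m\geq n}K_{m}$ for each $n$.

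\textbf{Step 1: each $Q_{n}$ is compact saturated.} Saturation is clear. For compactness, take an open cover $\{U_{j}\}$ of $Q_{n}$. Finitely many members cover $K$; call their union $U$. Then $K\in\Box U$, and convergence gives an $N$ with $K_{m}\subseteq U$ for all $m\geq N$. The finitely many $K_{m}$ with $n\leq m<N$ are each covered by finitely many $U_{j}$. This yields a finite subcover.

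\textbf{Step 2: the sequence is filtered with intersection $K$.} The $Q_{n}$ decrease, so $\{Q_{n}\}$ is a directed subset of $(K(X),\supseteq)$. I claim $\bigcap_{n}Q_{n}=K$. The inclusion $\supseteq$ is trivial. For $\subseteq$, take $x\notin K$. Then $K\subseteq X\setminus\dn x$, which is open because $\dn x=\overline{\{x\}}$. Convergence gives an $N$ with $K_{m}\subseteq X\setminus\dn x$ for all $m\geq N$, so $x\notin Q_{N}$. By Lemma 3.7, $\bigvee_{n}Q_{n}=K$ in $K(X)$.

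\textbf{Step 3: conclusion.} Since $\mathcal{U}$ is Scott open and $K=\bigvee_{n}Q_{n}\in\mathcal{U}$, some $Q_{n_{0}}\in\mathcal{U}$. Scott open sets are upper sets for $\supseteq$, and $K_{m}\subseteq Q_{n_{0}}$ for every $m\geq n_{0}$. Hence $K_{m}\in\mathcal{U}$ for all $m\geq n_{0}$. So $\mathcal{U}$ is sequentially open, hence open in $P_{S}(X)$, and the two topologies coincide.

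The main point of care is Step 1, the compactness of $Q_{n}$. It uses only convergence in $P_{S}(X)$, not well-filteredness. Well-filteredness enters only through Lemma 3.6, for the easy inclusion and for $K(X)$ being a dcpo, so that the Scott topology is the intended one.
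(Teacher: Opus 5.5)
Your proof is correct and follows essentially the paper's argument. It uses the same sets $Q_{n}=K\cup\bigcup_{m\geq n}K_{m}$, gets compactness of $Q_{n}$ from convergence in $P_{S}(X)$, shows $\bigcap_{n}Q_{n}=K$ and applies Lemma 3.7, then uses Scott openness and upward closure to get $K_{m}\in\mathcal{U}$ for all large $m$. You argue directly for sequential openness rather than by contradiction, and this states the final step (the whole tail $K_{m}$, $m\geq n_{0}$, lies in $\mathcal{U}$) more accurately than the paper's write-up, which concludes ``$K\in\mathcal{A}$''.
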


\begin{proof} By Lemma 3.6, the upper vietoris topology on $K(X)$ is contained in $\sigma(K(X))$. Let $\mathcal{A}\in\sigma(K(X))$. Assume that there are a $K\in K(X)\setminus\mathcal{A}$ and a sequence $\{K_{n}\}_{n\in\mathbb{N}}\subseteq K(X)\setminus\mathcal{A}$ converging to $K$ in $P_{s}(X)$ such that $K\in\mathcal{A}$. $\forall\ n\in\mathbb{N}$, set $$Q_{n}=K\cup(\bigcup\limits_{n\leq m}K_{m}).$$

$\mathbf{Claim} \ 1$:  Every $Q_{n}$ is a compact saturated set of $X$.

Let $Q_{n}$ be covered by an open sets subfamily $\{U_{i}\mid i\in I\}\subseteq\mathcal{O}(X)$. Then we have $\{U_{i}\mid i\in I\}$ covers the compact set $K$. So there are open sets $U_{i_{1}},U_{i_{1}},\cdot\cdot\cdot,U_{i_{1}}$ such that $K\subseteq U_{i_{1}}\cup U_{i_{1}}\cup\cdot\cdot\cdot\cup U_{i_{1}}$. Take $V=U_{i_{1}}\cup U_{i_{1}}\cup\cdot\cdot\cdot\cup U_{i_{1}}$. As $\{K_{n}\}_{n\in\omega}$ converges to $K$, there is a $n_{0}$ such that $\{K_{s}\mid n_{0}\leq s\}\subseteq\Box V$. For every $0\leq r\leq n_{0}-1$, the compact set $K_{r}$ is also contained in $\bigcup\limits_{i\in I}U_{i}$. This means that $K_{r}\subseteq \bigcup\mathcal{A}_{r}$ for some finite subfamily $\mathcal{A}_{r}\subseteq\{U_{i}\mid i\in I\}$. Now, we can conclude that $$Q_{n}\subseteq \bigcup\limits^{n_{0}-1}\limits_{r=0}(\bigcup\mathcal{A}_{r})\cup(U_{i_{1}}\cup U_{i_{1}}\cup\cdot\cdot\cdot\cup U_{i_{1}}).$$ As a consequence, $Q_{n}$ is compact. Obviously, $Q_{n}$ is a saturated set.

$\mathbf{Claim} \ 2$: $\{Q_{n}\mid n\in\mathbb{N}\}$ is a filtered family and $\bigvee\limits_{n\in\mathbb{N}}Q_{n}=\bigcap\limits_{n\in\mathbb{N}}Q_{n}=K$.

Clearly, $\{Q_{n}\mid n\in\mathbb{N}\}$ is a filtered and $K\subseteq\bigcap\limits_{n\in\mathbb{N}}Q_{n}$. Choose a $q\in\bigcap\limits_{n\in\mathbb{N}}Q_{n}$ and $q\not\in K$. As $K$ is saturated, there exists an open set $U\in\mathcal{O}(X)$ such that $K\subseteq U$ and $q\not\in U$. Using $\{K_{n}\}_{n\in\omega}$ converging to $K$ again, there is a $j\in\mathbb{N}$ such that $\{K_{t}\mid t\in\mathbb{N},j\leq t\}\subseteq\Box U$. It follows that $q\in Q_{j}\subseteq U$, impossible. Thus, $\bigcap\limits_{n\in\mathbb{N}}Q_{n}=K$ and by Lemma 3.7, $\bigvee\limits_{n\in\mathbb{N}}Q_{n}=K$.

Since $\mathcal{A}\in\sigma(K(X))$, by Claim 2, there is a $n_{1}\in\mathbb{N}$ satisfying $Q_{n_{1}}\in\mathcal{A}$. Note that $\mathcal{A}$ is an upper set in $(K(X),\supseteq)$. So we know $K\in\mathcal{A}$, a contradiction. Consequently, $K(X)\setminus\mathcal{A}$ is sequential closed in $P_{s}(X)$. As $P_{s}(X)$ is a sequential space, $K(X)\setminus\mathcal{A}$ is closed in $P_{s}(X)$. Whence, $\mathcal{A}\in\mathcal{O}(P_{s}(X))$. Therefore, the upper vietoris topology and the Scott topology $\sigma(K(X))$ coincide. \end{proof}

\begin{definition}{\rm Let $X$ be a $T_{0}$ space. We say $X$ is a space of countable pseudocharacter if for every $x\in X$, there are countably many open sets $\{U_{n}\mid n\in\mathbb{N}\}$ such that $\up x=\bigcap\limits_{n\in\mathbb{N}}U_{n}$.}
\end{definition}

\begin{remark} {\rm Every first countable $T_{0}$ space is a space of countable pseudocharacter.}

\end{remark}

Sequential spaces and the spaces of countable pseudocharacter are strictly weaker than first countable spaces. Naturally, it arises the following problem:

\vspace*{0.2cm}
$\mathbf{Problem:}$ Let $X$ be a well-filtered space such that $P_{S}(X)$ is a spaces of countable pseudocharacter. Is the Scott topology identical to the vietoris topology on $K(X)$?
\vspace*{0.2cm}

The following Example 3.13 suggests that the above problem has a negative solution.

\begin{example} {\rm Let $\beta\mathbb{N}$ be the Stone-\v{C}ech compactification of the discrete space $\mathbb{N}$ and $p\in\beta\mathbb{N}$ a non-principal ultrafilter of $\mathbb{N}$. So $p\subseteq 2^{\mathbb{N}}$ has the following properties:

(i) $\emptyset\not\in p$;

(ii) if $A\in p, B\in p$, then $A\cap B\in p$;

(iii) if $A\in p$ and $A\subseteq B$, then $B\in p$;

(iv) if $A\subseteq\mathbb{N}$, then $A\in p$ or $\mathbb{N}\setminus A\in p$;

(v) All finite subsets of $\mathbb{N}$ does not belong to $p$;

(vi) $\bigcap\limits_{A\in p}A=\emptyset$.

Let $X=\mathbb{N}\cup\{p\}$. The topology on $X$ is generate by $\mathcal{A}$(as a subbase), where
$$\mathcal{A}=\{\{n\}\mid n\in\mathbb{N}\}\cup\{A\cup\{p\}\mid A\in p\}.$$

Then $X$ satisfies the following conditions.

(1) $X$ is Hausdorff.

$\forall\ x,y\in X$ with $x\neq y$. We just consider the case $x\in\mathbb{N},y=p$. Using (iv) and (v), $\{x\}$ and $(\mathbb{N}\setminus\{x\})\cup\{p\}$ are open neighbourhoods at $x$ and $y$ respectively. Furthermore, $x\not\in(\mathbb{N}\setminus\{x\})\cup\{p\}$. Thus, $X$ is Hausdorff.

(2) $X$ is a space of countable pseudocharacter but not first countable.

$\forall\ x\in \mathbb{N}$, $\up x=\{x\}$ is open. $\forall\ n\in\mathbb{N}$, $\{m\mid n\leq m\in\mathbb{N}\}\in p$. Then $$\{p\}\cup\{m\mid n\leq m\in\mathbb{N}\}$$ is open in $X$. Furthermore, $$\{p\}=\up p=\bigcap\limits_{n\in\mathbb{N}}(\{p\}\cup\{m\mid n\leq m\in\mathbb{N}\}).$$ Thus, $X$ is a space of countable pseudocharacter.

$X$ does not exist a countable neighbourhood base at $p$. Suppose $\{D_{n}\mid n\in\mathbb{N}\}\subseteq\mathcal{A}$ is a neighbourhood subbase at $p$. Then $$\{D_{0}\cap D_{1}\cap\cdot\cdot\cdot\cap D_{p}\mid p\in\mathbb{N}\}$$ is a neighbourhood base at $p$. In convenience, we set $\forall\ n\in\mathbb{N}$, $D_{n}=\{p\}\cup F_{n}$ and $F_{n}\in p$. Take $G_{n}=F_{0}\cap F_{1}\cap\cdot\cdot\cdot\cap F_{n}$, for all $n\in\mathbb{N}$. Then by (ii), $G_{n}\in p$ and by (iv)(v), every $G_{n}$ is infinite. So for all $n\in\mathbb{N}$, we can choose a $x_{n}\in G_{n}$ and a $y_{n}\in G_{n}$ such that every pair elements coming form $\{x_{n}\mid n\in\mathbb{N}\}\cup\{y_{n}\mid n\in\mathbb{N}\}$ are different. Set
$$I=\{x_{n}\mid n\in\mathbb{N}\},J=\{y_{n}\mid n\in\mathbb{N}\}.$$ As $I\cap J=\emptyset$, one of $I$ and $J$ does not belong to $p$. Suppose $I\not\in p$. By (iv) $\mathbb{N}\setminus I\in p$ and thus
$$H=\{p\}\cup(\mathbb{N}\setminus I)$$ is open in $X$.

For every $k\in\mathbb{N}$, $$x_{k}\in I\subseteq G_{n}\subseteq D_{0}\cap\cdot\cdot\cdot\cap D_{K},$$ but $x_{k}\not\in H$. It follows that for all $k\in \mathbb{N}$, $D_{1}\cap\cdot\cdot\cdot\cap D_{k}\not\subseteq H$. Therefore, $X$ is not first-countable.

(3) $K(X)=\{F\mid F\ \mbox{is a finite subset of}\ X\}$.

Let $K$ be a infinite subset of $X$. Then $K\cap\mathbb{N}$ is finite. Split $K\cap\mathbb{N}$ into two infinite disjoint subsets $B$ and $C$. Then by (i)(ii), one of $B$ and $C$ does not belong to $p$. Without loss of generality, we let $B\not\in p$. By (iv), $\mathbb{N}\setminus B\in p$. Let $$U_{B}=(\mathbb{N}\setminus B)\cup\{p\}.$$ Consider the open set family
$$\mathcal{B}=\{U_{B}\}\cup\{\{b\}\mid b\in B\}.$$ Clearly, $\mathcal{B}$ covers $K$ but $\mathcal{B}$ has no finite subfamily covering $K$. This implies that $K$ is not compact. Hence, the compact subsets of $X$ are all finite subsets.

(4) $P_{S}(X)$ is a space of countable pseudocharacter.

Let $Q\in K(X)$. We consider the following two cases.

(4-1) $p\not\in Q$.

Now, $Q\subseteq\mathbb{N}$ is open. Then $\up_{P_{s}(X)}Q=\Box Q$.

(4-2) $p\in Q$.

By (3), $Q=\{p\}\cup F$ for some finite subset $F\subseteq\mathbb{N}$. $\forall\ r\in\mathbb{N}$, set $$U_{r}=\{p\}\cup F\cup\{t\mid r\leq t, t\in\mathbb{N}\}.$$ By (2), $U_{r}$ is open in $X$. Besides,
\begin{center}$\begin{array}{lll}
\bigcap\limits_{r\in\mathbb{N}}\Box U_{r}&=&\{R\in K(X)\mid \forall\ r\in\mathbb{N}, R\subseteq U_{r}\}\\
&=&\{R\in K(X)\mid  R\subseteq \bigcap\limits_{r\in\mathbb{N}}U_{r}\}\\
&=&\{R\in K(X)\mid  R\subseteq Q\}\\
&=&\up_{P_{S}(X)}Q.\\
\end{array}$\end{center}
Therefore, $P_{S}(X)$ is a space of countable pseudocharacter.

(5) The Scott topology and the vieotris topology on $K(X)$ do not coincide.

Let $\mathcal{C}=\{\{p\}\}$. Then $\mathcal{C}$ is an upper set in $K(X)$. By (3), every element of $(K(X),\supseteq)$ is a compact element. So $\mathcal{C}$ is a Scott open set in $K(X)$. Suppose there is an open set $W\in\mathcal{O}(X)$ such that $\{p\}\in \Box U\subseteq\mathcal{C}$. Then there is a $E\in p$ such that $E\cup\{p\}\subseteq U$. By (v), $E$ is a infinite subset of $\mathbb{N}$. $\Box U$ is finite because all finite subsets of $E$ belong to $\Box U$. This contradicts with $\Box U\subseteq\mathcal{C}$. So $\mathcal{C}$ is not open in the vietoris topology.
}
\end{example}

In \cite{EEF30}, Xu and Yang proposed the following problem.

\vspace*{0.2cm}
$\mathbf{Problem:}$ Let $X$ be a first countable well-filtered space. Does the Scott topology and the Vietoris topology on $K(X)$ coincide?
\vspace*{0.2cm}

In the following, we present  a first countable Hausdorff space $X$ which violates all known sufficient conditions (Theorem 3.8, Corollary 3.9 and Theorem 3.10) for the coincidence of the Scott topology and the Vietoris topology on $K(X)$.

\begin{example} {\rm Let $C=\{(x,y)\mid x^{2}+y^{2}=1\}$ be the unit circle endowed with the usual subspace topology of the Euclidean space $\mathbb{R}^{2}$. Set $A(C)=C\times\{0,1\}$. The topology on $A(C)$ is defined by:

(i) $\forall\ (x,0)\in C\times\{0\}$, $(x,0)$ has the neighbourhood system
$$\{(U\times\{0,1\})\setminus(x,1)\mid U\ \mbox{is the open neighbourhood at}\ x \ \mbox{in}\ C\};$$

(ii) $\forall\ (x,1)\in C\times\{1\}$, $(x,1)$ is a isolated points.

In convenience, we let $N(x,U)=(U\times\{0,1\})\setminus(x,1)$

Let $X=A(C)\times\mathbb{Q}$ be the product space of $A(C)$ and $\mathbb{Q}$, where $\mathbb{Q}\subseteq\mathbb{R}$ is the rational numbers set endowed with the subspace topology. The we have the following statements:

(1) $X$ is Hausdorff.

$\forall\ (a,i),(b,j)\in A(C)$ with $(a,i)\neq(b,j)$, we consider the following cases:

(1-1) $i=j=1$. Then $\{(a,1)\}$ and $\{(b,1)\}$ are open sets separating $a,1)$ and $(b,1)$.

(1-2) $i=j=0$. We can choose two open sets $U_{1},V_{1}\in\mathcal{O}(C)$ with $a\in U_{1}$ $b\in V_{1}$ such that $U_{1}\cap V_{1}=\emptyset$. Now $N(a,U_{1})$ and $N(b,V_{1})$ are open neighbourhood at $a,0)$ and $(b,0)$ respectively. Furthermore, we have $$N(a,U_{1})\cap N(b,V_{1})=\emptyset.$$

(1-3) $i=0,j=1$. Select a open set $U_{2}\in\mathcal{O}(C)$ with $a\in U_{2}$, then $N(a,U_{2})$ and $\{(b,1)\}$ are disjoint open neighbourhood at $(a,0)$ and $(b,1)$ respectively.

(1-4) $i=1,j=0$. It is similar to the case (1-3).

By cases (1-1)-(1-4),$A(C)$ is Hausdorff. Note that $\mathbb{Q}$ is Hausdorff. So $X$ is also Hausdorff.

(2) $X$ is first countable.

Since $C$ is first countable, $A(C)$ is first-countable. Clearly, $\mathbb{Q}$ is first countable. Hence, $X$ is first-countable.

(3) $X$ is not metrizable.

Let $\mathcal{U}$ be an open cover of $A(C)$. Then $\forall\ x\in C$, there are an open set $U_{x}\in\mathcal{O}(C)$ and a $G_{x}\in\mathcal{U}$ such that $(x,0)\in N(x,U_{x})\subseteq G_{x}$. Obviously, $\{U_{x}\mid x\in C\}$ is an open cover of $C$. Since $C$ is compact, there are $x_{1},x_{2},\cdot\cdot\cdot,x_{n}\in C$ such that $C=U_{x_{1}}\cup U_{x_{2}}\cup\cdot\cdot\cdot\cup U_{x_{n}}$. Then we know
$$(C\times\{0\})\cup((C\setminus\{x_{1},x_{2},\cdot\cdot\cdot,x_{n}\})\times\{1\})\subseteq \bigcup\limits_{1\leq k\leq n}N(x_{k},U_{x_{k}})\subseteq \bigcup\limits_{1\leq k\leq n}G_{x_{k}}.$$ For every $1\leq k\leq n$, choose a $H_{x_{k}}\in\mathcal{U}$ with $(x_{k},1)\in H_{x_{k}}$.
It follows that $$\{G_{x_{k}})\mid 1\leq k\leq n\}\cup\{H_{x_{k}}\mid 1\leq r\leq n\}$$ is a finite subcover of $A(C)$. Thus, $A(C)$ is compact.

Since $\{\{(x,1)\}\mid x\in C\}$ is a uncountable family consisting of disjoint open sets $A(C)$, $A(C)$ is not second countable. Every compact metrizable space is second countable(see \cite{RE77}). This means that $A(C)$ is not metrizable. Therefore, $X=A(C)\times\mathbb{Q}$ is not metrizable.

(4) $X$ contains a uncountable compact subset.

Consider the subspace $K_{0}=(C\times\{0\})\times\{0\}$ of $X$. One can check that the subspace $K_{0}$ is homeomorphic to the unit circle $C$. So $K_{0}$ is a uncountable compact subset of $X$.

(5) $X$ is not compact and not locally compact.

As $\mathbb{Q}$ is not compact, $X=A(C)\times\mathbb{Q}$ is not compact.

Suppose $X=A(C)\times\mathbb{Q}$ is locally compact. Fix $a\in A(C)$ and $q\in\mathbb{Q}.$
By local compactness, there exist an open set $V\subseteq X$ and a compact set $K\subseteq X$ such that $(a,q)\in V\subseteq K\subseteq X.$ Then $\pi_2(V)$ is an open subset of $\mathbb{Q}$ containing $q$ and $\pi_2(K)$ is compact in $\mathbb{Q}.$
Therefore, $q\in\pi_2(V)\subseteq\pi_2(K)$. Thus every point of $\mathbb{Q}$ has an open neighborhood contained in a compact subset of $\mathbb{Q},$ which would imply that $\mathbb{Q}$
is locally compact.  We now show that this is impossible. Fix $q\in\mathbb{Q},$ and suppose that there exist an open set $W\subseteq\mathbb{Q}$ and a compact set $L\subseteq\mathbb{Q}$
such that $q\in W\subseteq L$. Since $W$ is an open neighborhood of $q$ in $\mathbb{Q}$,
there exists $\varepsilon>0$ such that $(q-\varepsilon,q+\varepsilon)\cap\mathbb{Q}\subseteq W\subseteq L$. Choose an irrational number $\alpha\in(q-\varepsilon,q+\varepsilon)$
and a sequence $\{q_n\}_{n\in\mathbb{N}}$ of rational numbers satisfying $q_n\in(q-\varepsilon,q+\varepsilon)\cap\mathbb{Q}$ and converging to $\alpha$. Then $q_n\in L$ for every $n$. $L$ is compact in $\mathbb{R}$ and therefore $L$ is closed in $\mathbb{R}$.
Since $\{q_n\}_{n\in\mathbb{N}}$ converges to $\alpha$ in $\mathbb{R},$
it follows that $\alpha\in L\subseteq\mathbb{Q}$, contradicting the fact that $\alpha\not\in\mathbb{Q}$. Therefore, $\mathbb{Q}$ is not locally compact. Consequently, $X=A(C)\times\mathbb{Q}$ is not locally compact.

(6) $P_{S}(X)$ is not a sequential space.

Set $C_{0}=C\times\{0\}$ and $D=C\times\{1\}$. For every countable subset $E\subseteq D$, take
$$M_{E}=(C_{0}\cup(D\setminus E))\times\{0\}=(A(C)\setminus E)\times\{0\}$$.
Since $E\subseteq D$ is open on $A(C)$, the closed subset $A(C)\setminus E$ is compact in $A(C)$. Hence, $M_{E}$ is a compact subset in $X$.$$\mathcal{B}=\{H\in K(X)\mid M_{E}\subseteq H,\ \mbox{where}\ E\subseteq D\ \mbox{is a countable subset}\}.$$

$\mathbf{Claim} \ 1$: $\mathcal{B}$ is not closed in $P_{S}(X)$.

Take $K_{0}=C_{0}\times\{0\}$. By (4), $K_{0}\in K(X)$. However, $K_{0}\not\in\mathcal{B}$ because $$K_{0}\cap (D\times\{0\})=\emptyset.$$
Let $W\in\mathcal{O}(X)$ with $K_{0}\in\Box W$. Define a mapping $f:A(C)\longrightarrow X$ by
$$\forall\ a\in A(C), f(a)=(a,0).$$
Then $f$ is continuous. So $$f^{-1}(U)=\{b\in A(C)\mid (b,0)\in U\}$$ is open in $A(C)$. Consequently, the closed subset $A(C)\setminus f^{-1}(U)$ is compact in $A(C)$. Furthermore, $A(C)\setminus f^{-1}(U)\subseteq D$ since $C_{0}=f^{-1}(K_{0})\subseteq f^{-1}(U)$. Then $A(C)\setminus f^{-1}(U)$ is compact in discrete subspace $D$. As a consequence, $A(C)\setminus f^{-1}(U)$ is finite subset of $D$. Take $$F=A(C)\setminus f^{-1}(U).$$ Then we have
$$M_{F}=(C_{0}\cup(D\setminus F))\times\{0\}=f^{-1}(U)\times\{0\}\subseteq U.$$
Whence, $M_{F}\in\mathcal{B}\cap(\Box U)$. It follows that $K_{0}\in\overline{\mathcal{B}}_{P_{S}(X)}$.

$\mathbf{Claim} \ 2$: $\mathcal{B}$ is sequential closed in $P_{S}(X)$.

Let $\{Q_{n}\}_{n\in\mathbb{N}}$ be a sequence in $\mathcal{B}$ converging to $Q$. Then $\forall\ n\in\mathbb{N}$, there exists a countable subset $E_{n}\subseteq D$ such that $M_{E_{n}}\subseteq Q_{n}$. Set $$E^{\ast}=\bigcup\limits_{n\in\mathbb{N}}E_{n}.$$ Clearly, $E^{\ast}$ is still countable. $\forall\ e\in D\setminus E^{\ast}$, we have that for all $n\in\mathbb{N}$, $(e,0)\in M_{E_{n}}\subseteq Q_{n}$. Assume $(e,0)\not\in Q$. As $X$ is Hausdorff, $U_{e}=X\setminus\{(e,0)\}$ is open and $Q\in\Box U_{e}$. As $\{Q_{n}\}_{n\in\mathbb{N}}$ converges to $Q$, there is a $m_{0}\in\mathbb{N}$ such that $\{Q_{s}\mid m_{0}\leq s\}\subseteq\Box U_{e}$. Whence, $Q_{m_{0}}\subseteq U_{e}$. This contradicts with $(e,0)\in Q_{m_{0}}$. It follows that $(D\setminus E^{\ast})\times\{0\}\subseteq Q$. $\forall\ (z,0)\in C_{0}$, $\forall W\in\mathcal{O}(C)$ with $z\in W$. The set $N(x,W)\cap (D\setminus E^{\ast})$ is nonempty because the uncountable set $(W\times\{1\})\setminus\{(z,1)\}$ can not be contained in the countable subset $E^{\ast}$. This implies that $C_{0}\subseteq\overline{D\setminus E^{\ast}}_{A(C)}$. In addition, the compact subset $Q$ is closed because $X$ is Hausdorff. Consequently, $$C_{0}\times\{0\}\subseteq\overline{(D\setminus E^{\ast})}_{A(C)}\times\overline{\{0\}}_{\mathbb{Q}}=\overline{(D\setminus E^{\ast})\times\{0\}}_{X}\subseteq \overline{Q}_{X}=Q.$$ This implies that
$$M_{E^{\ast}}=(C_{0}\cup(D\setminus E^{\ast}))\times\{0\}=(C_{0}\times\{0\})\cup((D\setminus E^{\ast})\times\{0\})\subseteq Q.$$ Equivalently, $Q\in\mathcal{B}$. Thus, $\mathcal{B}$ is sequential closed in $P_{S}(X)$.

By two claims, $P_{S}(X)$ is not a sequential space.

(7) The Scott topology $\sigma(K(X))$ contains the upper vietoris topology on $K(X)$.

By Lemma 3.6, the statement (7) holds.

}
\end{example}

\begin{problem} {\rm Let $X$ be the topological space showed in Example 3.7. Does the Scott topology agree with the vietoris topology on $K(X)$?}

\end{problem}

\section{Applications of sequential spaces to sober spaces}\label{sec:fm}

In this section, we present a sufficient condition for a well-filtered space to be sober.

\begin{theorem} {\rm Let $X$ be a $\omega$-well-filtered coherent $d$-space. If $X\times X$ is a sequential space, then $X$ is sober.}
\end{theorem}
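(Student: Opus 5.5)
The plan is to show that every irreducible closed set $A\subseteq X$ is directed in the specialization order. Once that is done, the $d$-space structure finishes the argument. Since $X$ is a dcpo, $s=\bigvee A$ exists. Since $A$ is closed in a $d$-space, it is Scott closed, so $s\in A$. Hence $A=\dn s=\overline{\{s\}}$, and uniqueness of $s$ follows from $T_{0}$.

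To get directedness, consider the set
$$\mathcal{B}=\{(a,b)\in A\times A\mid \up a\cap\up b\cap A\neq\emptyset\}\subseteq X\times X.$$
I will show three things: $\mathcal{B}\supseteq\Delta_{A}=\{(x,x)\mid x\in A\}$, $\mathcal{B}$ is sequentially closed in $X\times X$, and $A\times A\subseteq\overline{\Delta_{A}}$. Since $X\times X$ is sequential, $\mathcal{B}$ is then closed. Therefore $A\times A\subseteq\overline{\Delta_{A}}\subseteq\mathcal{B}$, which says precisely that any two elements of $A$ have an upper bound in $A$.

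The first inclusion is trivial. For the third, take $(a,b)\in A\times A$ and a basic neighbourhood $U\times V$ of it. Then $U\cap A\neq\emptyset$ and $V\cap A\neq\emptyset$. Irreducibility of $A$ gives a point $x\in U\cap V\cap A$, so $(x,x)\in(U\times V)\cap\Delta_{A}$.

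The main step is sequential closedness. Let $(a_{n},b_{n})\in\mathcal{B}$ converge to $(a,b)$, and pick $c_{n}\in\up a_{n}\cap\up b_{n}\cap A$. Since $A\times A$ is closed, $(a,b)\in A\times A$. Put
$$Q_{n}=\up a\cup\bigcup_{m\geq n}\up a_{m},\qquad P_{n}=\up b\cup\bigcup_{m\geq n}\up b_{m}.$$
Exactly as in Claim 1 of the proof of Theorem 3.10, $Q_{n}$ and $P_{n}$ are compact saturated, because a convergent sequence together with its limit is compact. As in Claim 2, the saturation argument gives $\bigcap_{n}Q_{n}=\up a$ and $\bigcap_{n}P_{n}=\up b$. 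By coherence each $R_{n}=Q_{n}\cap P_{n}$ is compact saturated. It is nonempty because $c_{m}\in R_{n}$ for all $m\geq n$. The family $\{R_{n}\}$ is a decreasing, hence countably filtered, family with $\bigcap_{n}R_{n}=\up a\cap\up b$.

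Suppose, towards a contradiction, that $\up a\cap\up b\cap A=\emptyset$. Then $\bigcap_{n}R_{n}\subseteq X\setminus A$, which is open. By $\omega$-well-filteredness some $R_{n}\subseteq X\setminus A$, contradicting $c_{n}\in R_{n}\cap A$. Hence $(a,b)\in\mathcal{B}$.

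I expect the delicate points to be the compactness of $Q_{n}$ and the identity $\bigcap_{n}Q_{n}=\up a$. Both must be checked for convergence in a non-Hausdorff space, but they mirror Claims 1 and 2 verbatim. I also need that the coordinate sequences $a_{n}\to a$ and $b_{n}\to b$ converge in $X$, which holds because the projections are continuous.
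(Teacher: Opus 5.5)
Your proposal is correct and follows essentially the same route as the paper. You show that the relation $\mathcal{B}$ (the paper's $R_A$) is sequentially closed using compact saturated sets built from convergent sequences, coherence and $\omega$-well-filteredness, and then $A\times A\subseteq\overline{\Delta_A}\subseteq\mathcal{B}$ gives directedness. The only difference is minor: the paper builds its compact sets from the upper bounds $c_n$ (which converge to both $a$ and $b$) and then separately checks that the point $z$ it finds lies above $a$ and $b$, whereas your sets built from $(a_n)$ and $(b_n)$ give $\bigcap_n R_n=\up a\cap\up b$ directly, which is slightly cleaner.
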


\begin{proof} Let $A\subseteq X$ be an irreducible subset. Set
$$R_{A}=\{(x,y)\in \overline{A}\times\overline{A}\mid \exists z\in\overline{A}\times\overline{A}, x\leq z,y\leq z\}.$$

$\mathbf{Claim}\ 1$: $R_{A}$ is sequential closed in $X\times X$.

Suppose $\{(a_{n},b_{n})\}_{n\in\mathbb{N}}$ coming form $R_{A}$ converges to $(a,b)$. Then for every $n\in\mathbb{N}$, there is $c_{n}\in\overline{A}$ such that $a_{n}\leq c_{n},b_{n}\leq c_{n}$. Let $U,V\in\mathcal{O}(X)$ with $(a,b)\in U\times V$. Since $\{(a_{n},b_{n})\}_{n\in\mathbb{N}}$ converges to $(a,b)$, there exists a $n_{0}$ such that $$\{(a_{m},b_{m})\mid m\in\mathbb{N},n_{0}\leq m\}\subseteq U\times V.$$ Since $U,V$ are upper sets in $X$, $$\{(c_{m},c_{m})\mid m\in\mathbb{N},n_{0}\leq m\}\subseteq U\times V.$$ This implies that the sequence $\{(c_{n},c_{n})\}$ converges to $(a,b)$ again. Consequently, the sequence $\{c_{n}\}_{n\in\mathbb{N}}$ converges to $x$ and $y$ in $X$. For every $k\in\mathbb{N}$, take
$$P_{k}=\up\{c_{m}\mid m\in\mathbb{N},k\leq m\}\cup\up a\ \mbox{and}\ Q_{k}=\up\{c_{m}\mid m\in\mathbb{N},k\leq m\}\cup\up b.$$
As $\{c_{n}\}_{n\in\mathbb{N}}$ converges to $a$ and $b$, we can check that $P_{k}$ and $Q_{k}$ are compact saturated set in $X$ for all $k\in\mathbb{N}$. $P_{k}\cap Q_{k}$ is still a compact saturated set in $X$ because $X$ is coherent. Assume that $$\overline{A}\cap(\bigcap\limits_{n\in\mathbb{N}}(P_{k}\cap Q_{k}))=\emptyset.$$ Equivalently, $$\bigcap\limits_{n\in\mathbb{N}}(P_{k}\cap Q_{k})\subseteq X\setminus \overline{A}.$$ As $X$ is $\omega$-well-filtered and $\{P_{k}\cap Q_{k}\mid k\in\mathbb{N}\}$ is a filtered family of compact saturated sets, there is a $k_{0}$ such that $$P_{k_{0}}\cap Q_{k_{0}}\subseteq X\setminus \overline{A}.$$ Whence $c_{k_{0}}\in P_{k_{0}}\cap Q_{k_{0}}\subseteq X\setminus \overline{A}$, a contradiction. As a consequence, $$\overline{A}\cap(\bigcap\limits_{n\in\mathbb{N}}(P_{k}\cap Q_{k}))\neq\emptyset.$$ Choose a $$z\in\overline{A}\cap(\bigcap\limits_{n\in\mathbb{N}}(P_{k}\cap Q_{k}))\neq\emptyset.$$ Assume that $a\not\leq z$. Then $X\setminus \dn z$ is an open neighbourhood at $a$. It follows form $\{c_{n}\}_{n\in\mathbb{N}}$ converging to $a$ that there is $n_{1}$ such that $\{c_{n}\mid n\in\mathbb{N},n_{1}\leq n\}\subseteq X\setminus \dn z$. Since $z\in\bigcap\limits_{k\in\mathbb{N}}P_{k}$ and $z\not\in\up a$, $z\in\bigcap\limits_{k\in\mathbb{N}}(\up\{c_{m}\mid m\in\mathbb{N},k\leq m\})$. So for every $k\in\mathbb{N}$ there exists a $r_{k}\in\mathbb{N}$ with $k\leq r_{k}$ satisfying $c_{r_{k}}\leq z$. In particular, $c_{r_{n_{1}}}\leq z$ ($n_{1}\leq r_{n_{1}}$), a contradiction. So we conclude that $a\leq z$. Similarly, $b\leq z$. Therefore, $z\in R_{A}$ and $R_{A}$ is sequential closed in $X\times X$.

Since $X\times X$ is a sequential space, $R_{A}$ is closed in $X\times X$. Set
$$\bigtriangleup A=\{(x,x)\mid x\in A\}.$$ Obviously, $\bigtriangleup A\subseteq R_{A}$.

$\mathbf{Claim}\ 2$: $\overline{A}$ is directed.

$\forall\ p,q\in \overline{A}$ and $\forall\ E,F\in\mathcal{O}(X)$ with $(p,q)\in E\times F$. As $\overline{A}$ is irreducible, $\overline{A}\cap E\cap F\neq\emptyset$. Select a $s\in \overline{A}\cap E\cap F$. Then $(s,s)\in \bigtriangleup A\cap (E\times F)$. This implies that $$(p,q)\in\overline{\bigtriangleup A}_{X\times X}.$$  Since $R_{A}$ is closed and $\bigtriangleup A\subseteq R_{A}$, $(p,q)\in\overline{\bigtriangleup A}_{X\times X}\subseteq R_{A}$. Consequently, $p,q$ has an upper bound in $\overline{A}$. Thus, $\overline{A}$ is directed.

As $X$ is a $d$-space, $\bigvee \overline{A}$ exists and $\overline{A}=\overline{\{\bigvee \overline{A}\}}$. Therefore, $X$ is sober.
\end{proof}

\begin{corollary} {\rm Let $X$ be a well-filtered coherent space. If $X\times X$ is a sequential space, then $X$ is sober.}
\end{corollary}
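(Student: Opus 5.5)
The corollary should follow directly from Theorem 4.1. The plan is to check that a well-filtered space meets the remaining hypotheses of that theorem, namely that it is $\omega$-well-filtered and a $d$-space; coherence and sequentiality of $X\times X$ are assumed outright. Once these two facts are in place, Theorem 4.1 applies verbatim and gives the sobriety of $X$.

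\textbf{Step 1: well-filtered implies $\omega$-well-filtered.} This is immediate from the definitions. A countable filtered family of compact saturated sets is in particular a filtered family. So the well-filtered condition, applied to it and to any open $U$ with $\bigcap\mathcal{F}\subseteq U$, produces the required $F\in\mathcal{F}$ with $F\subseteq U$.

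\textbf{Step 2: well-filtered (and $T_0$) implies $d$-space.} This is the only step with content; it is standard (see \cite{GG03}), and I would sketch it as follows. Let $D\subseteq X$ be directed in the specialization order. The family $\{\up d\mid d\in D\}$ is filtered, and each $\up d$ is compact saturated.

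First I show that $\bigcap_{d\in D}\up d$ is nonempty and has a least element. Set $C=\overline{D}$. Suppose $\bigcap_{d\in D}\up d\cap C=\emptyset$. Then $\bigcap_{d\in D}\up d\subseteq X\setminus C$, so well-filteredness gives some $d$ with $\up d\subseteq X\setminus C$. This contradicts $d\in C$. Hence there is $x\in C\cap\bigcap_{d\in D}\up d$. Such an $x$ is an upper bound of $D$ lying in $\overline{D}$. Any other upper bound $y$ satisfies $D\subseteq\dn y=\overline{\{y\}}$, hence $x\in\overline{D}\subseteq\dn y$, so $x\le y$. Therefore $x=\bigvee D$ exists and $X$ is a dcpo.

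Second I show that every open $U$ is Scott open. If $\bigvee D\in U$ but $D\cap U=\emptyset$, then $D\subseteq X\setminus U$, which is closed. So $\bigvee D\in\overline{D}\subseteq X\setminus U$, a contradiction. Open sets are upper sets in the specialization order, so $U$ is Scott open, and $X$ is a $d$-space.

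\textbf{Step 3: conclude.} With Steps 1 and 2, $X$ is an $\omega$-well-filtered coherent $d$-space with $X\times X$ sequential, so Theorem 4.1 yields that $X$ is sober. The only potential obstacle is Step 2, in particular the fact that the supremum lies in $\overline{D}$. This is handled by the well-filteredness argument against the open set $X\setminus\overline{D}$, or can simply be cited from \cite{GG03}.
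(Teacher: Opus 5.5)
Your proposal is correct and takes the same route as the paper, which states this corollary without proof as an immediate consequence of Theorem 4.1. The paper implicitly relies on the two standard facts you verify: well-filtered implies $\omega$-well-filtered, and a well-filtered $T_0$ space is a $d$-space. Your argument for the second fact, using the filtered family $\{\up d\mid d\in D\}$ and the open set $X\setminus\overline{D}$, is the standard one and is sound.
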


\begin{lemma} {\rm (See \cite{XLOP17}) Let $L$ be a complete lattice. Then $\Sigma L$ is a well-filtered coherent space.}
\end{lemma}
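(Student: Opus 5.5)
This lemma is quoted from \cite{XLOP17}, so the plan is to reprove the two properties of $\Sigma L$ directly: well-filteredness and coherence.

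\textbf{Well-filteredness.} Let $\{K_i\mid i\in I\}$ be a filtered family of compact saturated subsets of $\Sigma L$, and let $U$ be Scott open with $\bigcap_{i\in I}K_i\subseteq U$. Suppose, for contradiction, that no $K_i$ lies in $U$.

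\begin{enumerate}
\item Since $L$ is complete, each saturated set has the form $K_i=\up\min(K_i)$, where $\min(K_i)$ is nonempty and every element of $K_i$ lies above a minimal one (Zorn's lemma plus compactness, as in the standard Rudin-lemma argument).
\item Apply Rudin's lemma (topological version) to the filtered family of sets $K_i\setminus U$. This gives a directed set $D\subseteq\bigcup_{i}(K_i\setminus U)$ that meets every $K_i\setminus U$.
\item Put $d=\bigvee D$, which exists because $L$ is complete. Since $D$ meets each $K_i$ and each $K_i$ is an upper set, $d\in K_i$ for every $i$. Hence $d\in\bigcap_{i}K_i\subseteq U$.
\item Because $U$ is Scott open, $D\cap U\neq\emptyset$. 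But $D\cap U=\emptyset$ by construction, a contradiction.
\end{enumerate}

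In fact this step is the standard result that every dcpo with the Scott topology is well-filtered once Rudin's lemma is available; I expect that route to be known, so I would cite Rudin's lemma \cite{GG03} rather than reprove it.

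\textbf{Coherence.} This is the main obstacle. Let $K,Q\in K(\Sigma L)$; we must show that $K\cap Q$ is compact, and nonempty so that it lies in $K(\Sigma L)$.

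\begin{enumerate}
\item \emph{Nonemptiness.} Pick $k\in K$ and $q\in Q$. Then $k\vee q\in K\cap Q$ because both sets are upper sets. So $K\cap Q\neq\emptyset$.
\item \emph{Reduction to a continuous image.} The join map $\vee:L\times L\to L$ is Scott continuous in each variable, hence jointly Scott continuous as a map $\Sigma(L\times L)\to\Sigma L$. The set $K\times Q$ is compact in $\Sigma L\times\Sigma L$. If I can pass from this product to $\Sigma(L\times L)$, then $\up(K\vee Q)$, with $K\vee Q=\{k\vee q\mid k\in K,\ q\in Q\}$, is compact as the saturation of a continuous image of a compact set.
\item \emph{The set identity.} $K\cap Q\subseteq \up(K\vee Q)$ since $x=x\vee x$, and $K\vee Q\subseteq K\cap Q$ since both are upper sets. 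So $K\cap Q=\up(K\vee Q)$.
\end{enumerate}

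The difficulty is that $\Sigma(L\times L)$ may be strictly finer than $\Sigma L\times\Sigma L$, so compactness of $K\times Q$ in the product topology does not transfer directly. I would avoid this by proving compactness of $K\cap Q$ through open covers, using the Scott-open property of the cover members.

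\begin{enumerate}
\item Let $\{U_j\mid j\in J\}$ be a cover of $K\cap Q$ by Scott open sets, and assume, for contradiction, that it has no finite subcover.
\item Take a maximal chain argument on finite unions $V=U_{j_1}\cup\cdots\cup U_{j_n}$: define the family of finite unions that fail to cover, and use compactness of $K$ to produce directed sets inside $K$ that escape every such $V$.
\item Take the sup of such a directed set. For each $q\in Q$, its join with the sup lies in $K\cap Q$ and therefore in some $U_j$. Scott openness of $U_j$ then forces some element of the directed set, joined with $q$, into $U_j$.
\item Iterate the same argument in the $Q$-coordinate to reach a contradiction.
\end{enumerate}

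If this becomes too technical, I would fall back on the known fact that $\up(K\vee Q)$ is Scott compact because the $\vee$-image of $\min K\times\min Q$ yields a set whose saturation is compact via Rudin's lemma. This is exactly how \cite{XLOP17} proceeds.
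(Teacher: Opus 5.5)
The paper gives no proof of this lemma; it only cites Xi--Lawson. So your proposal has to stand on its own, and both halves have a genuine gap.

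\textbf{Well-filteredness.} Step 2 is not what Rudin's lemma says. The order-theoretic Rudin lemma produces a directed set only for filtered families of \emph{finite} sets. The topological version, applied to $\{K_i\}$ and the closed set $L\setminus U$, only yields an \emph{irreducible} closed set $A\subseteq L\setminus U$ meeting every $K_i$. In a non-sober Scott space (e.g.\ Isbell's lattice) irreducible closed sets need not be directed.

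The rest of your argument uses completeness only to form $\bigvee D$, which exists in any dcpo. So it would prove that every dcpo is Scott well-filtered, and that is false. In Johnstone's dcpo $\mathbb{J}$ every nonempty Scott open set contains all but finitely many maximal points. Hence $K_n=\{(m,\omega)\mid m\geq n\}$ is a filtered family of compact saturated sets with empty intersection, and no directed set (a singleton, inside this antichain) meets all of them.

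So the lattice structure must be used essentially. One way is the following.
\begin{enumerate}
\item By Zorn's lemma (compactness of the $K_i$ makes chains work), take $A$ minimal among closed subsets of $L\setminus U$ meeting every $K_i$. Minimality means every open $V$ meeting $A$ contains $A\cap K_j$ for some $j$.
\item Since $\bigvee A\in\bigcap_i K_i\subseteq U$ and finite joins of elements of $A$ form a directed set with supremum $\bigvee A$, some $a_1\vee\cdots\vee a_n$ with all $a_l\in A$ lies in $U$. Take $n$ minimal; then $n\geq 2$ because $A\cap U=\emptyset$.
\item Put $b=a_3\vee\cdots\vee a_n$ (with $b=\bot$ if $n=2$), and pick $j$ with $A\cap K_j\subseteq\{x\mid x\vee a_2\vee b\in U\}$.
\item Then $G=\{y\mid x\vee y\vee b\in U\ \text{for all}\ x\in A\cap K_j\}$ is Scott open, by compactness of $A\cap K_j$, and it contains $a_2$. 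So $A\cap K_{j'}\subseteq G$ for some $j'$.
\item Any $z\in A\cap K_{j''}$ with $K_{j''}\subseteq K_j\cap K_{j'}$ gives $z\vee b\in U$. This is a join of $n-1$ elements of $A$, contradicting the minimality of $n$.
\end{enumerate}

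\textbf{Coherence.} Your reduction is right: $\vee\colon\Sigma(L\times L)\to\Sigma L$ is continuous and $K\cap Q=\up(K\vee Q)$. But the cover argument you then sketch is not a proof. Its step ``use compactness of $K$ to produce directed sets inside $K$'' repeats the same invalid passage from compact to directed, and the fallback is an unchecked attribution.

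The missing fact is that compactness \emph{does} transfer: if $K$ and $Q$ are Scott compact, then $K\times Q$ is compact in $\Sigma(L\times L)$. Take a directed cover $\{W_j\}$ of $K\times Q$ by Scott open subsets of $L\times L$. Each $G_j=\{x\mid \{x\}\times Q\subseteq W_j\}$ is Scott open: if $\bigvee E\in G_j$ with $E$ directed, then $Q$ is covered by the directed family of Scott open sections $\{y\mid (e,y)\in W_j\}$, $e\in E$, so one $e$ suffices. Again by compactness of $Q$, the $G_j$ form a directed open cover of $K$. Hence $K\subseteq G_j$, i.e.\ $K\times Q\subseteq W_j$, for some $j$. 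Therefore $K\vee Q$ is compact, and $K\cap Q=\up(K\vee Q)$ is a nonempty compact saturated set.
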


\begin{corollary}{\rm Let $L$ be a complete lattice. If $\Sigma L\times \Sigma L$ is a sequential space, then $\Sigma L$ is sober.}
\end{corollary}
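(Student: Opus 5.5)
The plan is to derive this corollary directly from Corollary 4.2 together with Lemma 4.3, and all that needs checking is that the hypotheses match. Let $L$ be a complete lattice. By Lemma 4.3, $\Sigma L$ is a well-filtered coherent space. By assumption, $\Sigma L\times\Sigma L$ is sequential. Corollary 4.2 then applies with $X=\Sigma L$ and gives that $\Sigma L$ is sober.

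To be safe, I would also confirm that Corollary 4.2 really follows from Theorem 4.1, in case the reader wants the chain spelled out. The required implications are standard:
\begin{enumerate}[label=(\roman*)]
\item Every well-filtered space is $\omega$-well-filtered, because a countable filtered family is in particular a filtered family.
\item Every well-filtered $T_{0}$ space is a $d$-space. Given a directed set $D$, consider the filtered family $\{\up d\mid d\in D\}$ of compact saturated sets; well-filteredness makes $X$ a dcpo and makes each open set Scott open. This is a standard fact from \cite{GG03}.
\end{enumerate}
For $\Sigma L$ one can even see the $d$-space property immediately: $L$ is a complete lattice, hence a dcpo, and the specialization order of the Scott topology is the original order of $L$, so every open set is Scott open by definition. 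Thus $\Sigma L$ is an $\omega$-well-filtered coherent $d$-space, and Theorem 4.1 applies directly with $X=\Sigma L$.

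There is no real obstacle here. The only point needing care is that the product in the hypothesis is the topological product $\Sigma L\times\Sigma L$, not $\Sigma(L\times L)$. Theorem 4.1 is stated for the topological product $X\times X$, so the hypothesis matches exactly and no comparison between the two product topologies is needed.
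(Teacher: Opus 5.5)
Your proposal is correct and matches the paper's route. The paper places this corollary immediately after Lemma 4.3 as a direct consequence of that lemma and Corollary 4.2, which is Theorem 4.1 plus the standard facts that well-filtered spaces are $\omega$-well-filtered $d$-spaces. The paper also remarks that the same conclusion follows from Lemma 4.5 and Theorem 4.6, via $\Sigma L\times\Sigma L=\Sigma(L\times L)$.
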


\begin{lemma}{\rm(See \cite{GG03}) Let $L$ be a complete lattice. If $\Sigma L\times\Sigma L=\Sigma(L\times L)$, then $\Sigma L$ is sober.}
\end{lemma}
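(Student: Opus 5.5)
The statement is the classical result that a complete lattice $L$ with $\Sigma L\times\Sigma L=\Sigma(L\times L)$ has $\Sigma L$ sober. My plan follows the same scheme as the proof of Theorem 4.1: show that every irreducible closed set $A\subseteq\Sigma L$ is directed, and then use that $\Sigma L$ is a $d$-space. Since $L$ is a complete lattice, a directed Scott closed $A$ gives $A=\overline{\{\bigvee A\}}=\dn\bigvee A$. Uniqueness of this point is automatic because $\Sigma L$ is $T_0$.

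\textbf{Step 1: a closed relation.} Fix an irreducible Scott closed set $A\subseteq L$ and put
\[
R_A=\{(x,y)\in A\times A\mid \exists z\in A,\ x\le z,\ y\le z\}.
\]
Since $A$ is a lower set and $L$ is a lattice, any $z\in A$ above both $x$ and $y$ lies above $x\vee y$, so $x\vee y\in A$. Hence
\[
R_A=\{(x,y)\in L\times L\mid x\vee y\in A\}=\vee^{-1}(A),
\]
where $\vee:L\times L\to L$ is the binary join. The join map preserves directed suprema in each variable jointly: $\bigvee_{i}(x_i\vee y_i)=(\bigvee x_i)\vee(\bigvee y_i)$ for directed $\{(x_i,y_i)\}$. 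By the criterion for continuity between dcpos recalled in the Preliminaries, $\vee:\Sigma(L\times L)\to\Sigma L$ is continuous. So $R_A$ is Scott closed in $L\times L$, and by the hypothesis $\Sigma L\times\Sigma L=\Sigma(L\times L)$ it is closed in the product topology $\Sigma L\times\Sigma L$.

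\textbf{Step 2: directedness.} This is the step where the hypothesis is really used, but it is the same as Claim 2 of Theorem 4.1. Take $p,q\in A$ and basic open sets $E\times F$ of $\Sigma L\times\Sigma L$ containing $(p,q)$. Then $A\cap E\ne\emptyset$ and $A\cap F\ne\emptyset$, so irreducibility of $A$ gives some $s\in A\cap E\cap F$. Thus $(s,s)\in(E\times F)\cap\{(x,x)\mid x\in A\}$, and $(p,q)$ lies in the product closure of the diagonal of $A$. That diagonal is contained in $R_A$, since $s\vee s=s\in A$. Because $R_A$ is product-closed, $(p,q)\in R_A$, i.e.\ $p\vee q\in A$. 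Hence $A$ is directed, with $\bigvee A\in A$ because $A$ is Scott closed, and so $A=\dn\bigvee A=\overline{\{\bigvee A\}}$.

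The main obstacle is making sure that closedness of $R_A$ is taken in the product topology rather than in $\Sigma(L\times L)$, because the closure argument in Step 2 relies on product basic open sets $E\times F$. This is exactly what the hypothesis supplies. The remaining verifications, the continuity of $\vee$ and the use of the directed-suprema criterion, are routine.
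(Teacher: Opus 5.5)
Your proof is correct. The paper gives no proof of this lemma; it only cites \cite{GG03}. So the useful comparison is with the paper's proof of Theorem 4.1, whose two-claim scheme you reuse.

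In Theorem 4.1 the hard part is showing that $R_A$ is closed in $X\times X$. The paper gets this by proving $R_A$ sequentially closed, which uses $\omega$-well-filteredness and coherence, and then applying sequentiality of $X\times X$. The $d$-space property is only needed at the very end.

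In your setting, completeness of $L$ makes that step immediate. You have $R_A=\vee^{-1}(A)$ with $\vee$ Scott continuous on $L\times L$, so $R_A$ is Scott closed. The hypothesis $\Sigma(L\times L)=\Sigma L\times\Sigma L$ then makes it closed in the product topology. From there, Claim 2 of Theorem 4.1 goes through verbatim: for irreducible $A$, the product closure of the diagonal $\{(s,s)\mid s\in A\}$ is all of $A\times A$. The final step needs only that Scott closed sets contain the suprema of their directed subsets.

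Put more compactly, your argument says that $\vee$ is jointly continuous on $\Sigma L\times\Sigma L$, so it maps $A\times A$, the closure of the diagonal of $A$, into $\overline{A}=A$. This is the standard proof of the cited result, and it makes the lemma self-contained instead of imported.
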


\begin{theorem} {\rm Let $P$ and $Q$ be two posets. If $\Sigma P\times \Sigma Q$ is a sequential  space, then $\Sigma (P\times Q)=\Sigma P \times \Sigma Q$.}
\end{theorem}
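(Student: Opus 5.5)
The plan is to prove the two inclusions $\mathcal{O}(\Sigma P\times\Sigma Q)\subseteq\sigma(P\times Q)$ and $\sigma(P\times Q)\subseteq\mathcal{O}(\Sigma P\times\Sigma Q)$ separately. Only the second one uses the hypothesis that $\Sigma P\times\Sigma Q$ is sequential.

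\emph{First inclusion.} It suffices to treat basic rectangles $U\times V$ with $U\in\sigma(P)$ and $V\in\sigma(Q)$. Clearly $U\times V$ is an upper set. Let $D\subseteq P\times Q$ be directed with $\bigvee D=(s,t)\in U\times V$. I will use the standard fact (\cite{GG03}) that the projections $\pi_1(D)$ and $\pi_2(D)$ are directed, with suprema $s$ and $t$. Scott openness then gives $d_1\in D$ with $\pi_1(d_1)\in U$ and $d_2\in D$ with $\pi_2(d_2)\in V$. An upper bound $d\in D$ of $d_1,d_2$ lies in $U\times V$, because both sets are upper sets.

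\emph{Second inclusion.} Let $W\in\sigma(P\times Q)$. Since $\Sigma P\times\Sigma Q$ is sequential, it suffices to show that $W$ is sequentially open. So take a sequence $(x_n,y_n)\to(x,y)\in W$ in $\Sigma P\times\Sigma Q$. Then $x_n\to x$ in $\Sigma P$ and $y_n\to y$ in $\Sigma Q$. I must show that $(x_n,y_n)\in W$ eventually, and I will argue by contradiction, passing to a subsequence with $(x_n,y_n)\notin W$ for all $n$.

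The tools are the slices. For fixed $q\in Q$ the slice $W^{q}=\{p\in P\mid (p,q)\in W\}$ is Scott open in $P$, since $p\mapsto(p,q)$ preserves directed suprema. Symmetrically, $W_{p}=\{q\in Q\mid (p,q)\in W\}$ is Scott open in $Q$. Applying $y_n\to y$ to $W_x\ni y$ gives $M$ with $(x,y_m)\in W$ for all $m\ge M$. Then, for each such $m$, applying $x_n\to x$ to $W^{y_m}\ni x$ gives $N(m)$ with $(x_n,y_m)\in W$ for all $n\ge N(m)$.

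\emph{Main obstacle.} The difficulty is the diagonal step: I need $(x_n,y_n)\in W$ itself, not merely $(x_n,y_m)$ for $n\ge N(m)$. To get around this, I would first try to use the sequentiality of the product a second time, on the set
\[
\mathcal{C}=\{(p,q)\in P\times Q\mid (p,y_n)\notin W \text{ and } (x_n,q)\notin W \text{ for infinitely many } n\}.
\]
The goal is to show that $\mathcal{C}$ is sequentially closed, hence closed, and that it contains $(x,y)$, which contradicts $W\ni(x,y)$ via the slices above. If that fails, the fallback is to mimic Claim~1 of Theorem~4.1: build from the tails $\{(x_m,y_m)\mid m\ge k\}$ an increasing chain (a directed set) in the Scott-closed lower set $(P\times Q)\setminus W$. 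The lower set is closed under existing directed suprema, and the chain should have $(x,y)$ below its supremum, so that $(x,y)\notin W$, a contradiction. I expect this diagonalisation, carried out without any meets available in general posets, to be the genuinely hard part of the argument.
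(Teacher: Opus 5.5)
Your first inclusion is correct, and so are the slice computations in the second part. The argument stops, however, exactly at the diagonal step, which is the whole content of the theorem, and neither of your two suggested routes supplies it.

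Proving that $\mathcal{C}$ is sequentially closed would mean passing from $(p_k,q_k)\to(p,q)$ to information about $(p,y_n)$ for infinitely many $n$ at once. Scott openness of the slices $W^{y_n}$ only controls each fixed $n$ separately. That is the same uniformity in $n$ you are missing, so this route is circular. The fallback through increasing chains in $(P\times Q)\setminus W$ has no mechanism behind it. In an arbitrary poset, a Scott-convergent sequence carries no order structure from which to extract a directed set with supremum above $(x,y)$. There are no meets, and there is no ($\omega$-)well-filteredness to exploit as in Claim~1 of Theorem~4.1.

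The missing idea is that a convergent sequence together with its limit is compact. With $M$ as in your argument, the set $E=\{y\}\cup\{y_m\mid m\ge M\}$ is compact in $\Sigma Q$, because any open set containing $y$ contains all but finitely many $y_m$; also $E\subseteq W_x$. Put $G=\{p\in P\mid \{p\}\times E\subseteq W\}$. This is an upper set, and it is Scott open. Let $D\subseteq P$ be directed with $\bigvee D\in G$. For each $e\in E$, the directed set $\{(d,e)\mid d\in D\}$ has supremum $(\bigvee D,e)\in W$, so $e\in W_d$ for some $d\in D$. Hence $\{W_d\mid d\in D\}$ is a directed family of Scott open subsets of $Q$ (directed because $W$ is an upper set) covering $E$. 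Compactness plus directedness give a single $d\in D$ with $E\subseteq W_d$, that is, $d\in G$.

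Since $x\in G$ and $x_n\to x$, there is $n_1\ge M$ with $x_n\in G$ for all $n\ge n_1$. As $y_n\in E$ for these $n$, you get $(x_n,y_n)\in W$ for all $n\ge n_1$. This is exactly the paper's proof, which writes $G$ as $f^{-1}(\Box E)$, where $f\colon P\to\mathcal{O}(\Sigma Q)$, $p\mapsto W_p$, is Scott continuous and $\Box E$ is Scott open in $\mathcal{O}(\Sigma Q)$. No second use of sequentiality and no chain construction is needed.
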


\begin{proof}
Set $$\tau=\sigma(P)\times\sigma(Q)
    \quad\text{and}\quad
    \rho=\sigma(P\times Q).$$
It is always the case that $$\sigma(P)\times\sigma(Q)\subseteq\sigma(P\times Q).$$
Conversely, let $A\subseteq P\times Q$ be
$\rho$-closed. Since $(P\times Q,\tau)=\Sigma P\times\Sigma Q$
is sequential, it suffices to show that $A$ is sequentially closed
with respect to $\tau$. Let $\{(p_n,q_n)\}_{n\in\mathbb{N}}$ be a sequence in $A$ and suppose that
the sequence $\{(p_n,q_n)\}_{n\in\mathbb{N}}$ converges to $(p,q)$ in $\Sigma P\times\Sigma Q$. Then $\{p_n\}_{n\in\mathbb{N}}$ ($\{q_{n}\}_{n\in\mathbb{N}}$) converges to $p$($q$) in $\Sigma P$ ($\Sigma Q$). Assume that $(p,q)\notin A$. Put $$T=(P\times Q)\setminus A.$$ Then $T\in\sigma(P\times Q)$ and $(p,q)\in T$.

Define a mapping $f:P\longrightarrow\mathcal O(\Sigma Q)$ by
 $$ \forall\ a\in P, f(a)=\{b\in Q:(a,b)\in T\}.$$
For every $a\in P$, we can check that $f(a)$ is Scott open in $Q$. Furthermore,
$f$ is Scott continuous. Since $(p,q)\in T$, we have $q\in f(p)$. As
$\{q_{n}\}_{n\in\mathbb{N}}$) converges to $q$, there exists $n_{0}$ such
that for every $n_0\leq n$, $q_n\in f(p)$. Let
$$E=\{q\}\cup\{q_n\mid n_0\leq n\}.$$
Then $E$ is compact in $\Sigma Q$, since it consists of a
convergent sequence together with its limit, and $E\subseteq f(p)$. Define
$$\Box E=\{G\in\mathcal O(\Sigma Q)\mid E\subseteq G\}.$$
Then $\Box E$ is Scott open in $\mathcal O(\Sigma Q)$. Since $E\subseteq f(p)$, we have $f(p)\in\Box E$. By the Scott
continuity of $f$, $W=f^{-1}(\Box E)$
is a Scott open neighbourhood of $p$ in $P$. Since $\{p_n\}_{n\in\mathbb{N}}$ converges to $p$,
there exists $n_0\leq n_1$ such that $p_{n}\in W$ for all $n_1\leq n$. In particular, $p_{n_1}\in W$, so $E\subseteq f(p_{n_1})$. Since $q_{n_1}\in E$, it follows that $q_{n_1}\in f(p_{n_1})$ and therefore $p_{n_1},q_{n_1})\in T$. This contradicts $(p_{n_1},q_{n_1})\in A$. Hence $(p,q)\in A$, and therefore $A$ is sequentially closed in
$(P\times Q,\tau)$. Since $\Sigma P\times \Sigma Q$  is sequential, $A$ is
$\tau$-closed. Thus every $\rho$-closed set is $\tau$-closed,
which gives $\rho\subseteq\tau$. Therefore
$\sigma(P\times Q)=\sigma(P)\times\sigma(Q)$. \end{proof}

Coincidentally, by Lemma 4.5 and Theorem 4.6, we can also obtain Corollary 4.4.


\begin{proposition} {\rm Every countable $T_{0}$ space $X$ is a space of countable pseudocharacter.}
\end{proposition}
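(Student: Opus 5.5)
The plan is to write $\up x$ as the intersection of its open neighbourhoods and then use countability of $X$ to reduce that intersection to a countable one. Fix $x\in X$. In a $T_{0}$ space with the specialization order, a point $y$ lies in every open neighbourhood of $x$ exactly when $x\in \overline{\{y\}}$, that is, when $x\leq y$. Hence
$$\up x=\bigcap\{U\in\mathcal{O}(X)\mid x\in U\}.$$
The task is therefore to pick countably many open neighbourhoods of $x$ whose intersection is already $\up x$.

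For this I would use that $X\setminus\up x$ is countable. For each $y\in X\setminus\up x$ we have $x\not\leq y$, so $x\notin\overline{\{y\}}$. Thus $U_{y}=X\setminus\overline{\{y\}}$ is an open set with $x\in U_{y}$ and $y\notin U_{y}$. Each $U_{y}$ is open, hence an upper set, so $\up x\subseteq U_{y}$. Consequently
$$\bigcap_{y\in X\setminus\up x}U_{y}=\up x,$$
and the indexing set $X\setminus\up x\subseteq X$ is countable.

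Two small points remain. If $X\setminus\up x=\emptyset$, take $U_{n}=X$ for all $n$. If $X\setminus\up x$ is finite and nonempty, repeat members to obtain a family indexed by $\mathbb{N}$. In either case we get $\{U_{n}\mid n\in\mathbb{N}\}$ with $\up x=\bigcap_{n\in\mathbb{N}}U_{n}$, as Definition 3.11 requires.

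There is no real obstacle in this argument. The only step needing care is the separation: choosing $U_{y}=X\setminus\overline{\{y\}}$, which contains $x$ precisely because $x\not\leq y$ in the specialization order.
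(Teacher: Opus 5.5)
Your proof is correct and follows the same route as the paper. Both enumerate the countable set $X\setminus\up x$, separate each of its points from $x$ by an open neighbourhood of $x$, and intersect. Your explicit choice $U_{y}=X\setminus\overline{\{y\}}$, and your handling of the case where $X\setminus\up x$ is empty or finite, are details the paper leaves implicit.
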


\begin{proof} Let $x\in X$. Then $X\setminus \up x$ is countable and set
$$X\setminus \up x=\{a_{0},a_{1},\cdot\cdot\cdot,a_{n},\cdot\cdot\cdot\}.$$
For every $n\in\mathbb{N}$, since $a_{n}\not\in\up x$, there is an open set $U_{n}$ such that $x\in U_{n}$ and $a_{n}\not\in U_{n}$. Then we can see
$$\bigcap\limits_{n\in\mathbb{N}}U_{n}=\up x.$$ Thus $X$ is a space of countable pseudocharacter.
\end{proof}

As we all known, sequential spaces and the spaces of countable pseudocharacter are strictly weaker than first countable spaces. Naturally, it arises the following problem:

\vspace*{0.2cm}
$\mathbf{Problem:}$ Let $X$ be a well-filtered coherent space such that $X\times X$ is a spaces of countable pseudocharacter. Is $X$ sober?
\vspace*{0.2cm}

The following example 4.8 gives a negative solution for the above problem.

\begin{example} {\rm Let $L$ be the countable complete lattice constructed by Miao, Xi, Li and Zhao (\cite{AB75}). Then by Lemma 4.3, $X=\Sigma L$ is a coherent well-filtered but not sober space. By Proposition 4.7, $X\times X$ is not a space of countable pseudocharacter.}
\end{example}

In general, the sequentiality of $X\times X$ is not necessary for the sobriety of $X$. See the following Example 4.9.

\begin{example} {\rm Let $X=[0,\omega_{1}]$, where $\omega_{1}$ is the least uncountable ordinal number. The topology on $X$ has a subbase
$$\mathcal{A}=\{(\alpha,\beta)\mid \alpha,\beta\in X, \alpha<\beta\}\cup\{[0,\alpha)\mid 0<\alpha\}\cup\{(\beta,\omega_{1}]\mid \beta<\omega_{1}\}.$$
Then the following conclusions hold.

(1) $X$ is Hausdorff.

It is not difficult to check.

By(1), $X$ is a coherent sober space.

(2) $X\times X$ is not a sequential space.

Let $A=[0,\omega_{1})\times\{0\}$.

$\mathbf{Claim}\ 1$: $A$ is not closed in $X\times X$.

Take $$p=(\omega_{1},0)$$ and $U,V\in\mathcal{O}(X)$ with $\omega_{1}\in U,0\in V$. Then there is a $\gamma<\omega_{1}$ and $0<\delta$ such that $$(\gamma,\omega_{1}]\subseteq U\ \mbox{and }\ [\delta,0)\subseteq V.$$ As $\gamma<\omega_{1}$, there is a $\mu\in(\gamma,\omega_{1})$, Now, we have $(\mu,0)\in((\gamma,\omega_{1}]\times[\delta,0))\cap A\subseteq (U\times V)\cap A$. This means that $p\in \overline{A}$. $A$ is closed because $p\in \overline{A}\setminus A$.

$\mathbf{Claim}\ 2$: $A$ is sequential closed in $X\times X$.

Suppose $\{(\xi_{n},0)\}_{n\in\mathbb{N}}$ is a consequence in $A$ converging to $(\theta,\vartheta)$. Then $\{\xi_{n}\}_{n\in\mathbb{N}}$ converges to $\theta$ and the constant sequence 0 converges to $\vartheta$. As $X$ is Hausdorff, $\vartheta=0$. Since all ordinal numbers $\alpha_{n}$ is countable, $$\varphi={\rm sup\{\xi_{n}\mid n\in\mathbb{N}\}}=\bigcup\{\xi_{n}\mid n\in\mathbb{N}\}$$ is a countable ordinal number. Assume that $\theta=\omega_{1}$. Consider the open neighbourhood $(\varphi,\omega_{1}]$ of $\theta$. Since $\{\xi_{n}\}_{n\in\mathbb{N}}$ converges to $\theta$, there is a $m_{0}$ such that $\{\xi_{k}\mid m_{0}\leq k\}\subseteq(\varphi,\omega_{1}]$. Particularly, $\varphi<\xi_{m_{0}}$, a contradiction. As a consequence, $\theta<\omega_{1}$. In other words, $(\theta,\vartheta)\in A$. Hence, $A$ is sequential closed.

By two claims, $X\times X$ is not a sequential space.}
\end{example}

\begin{proposition} {\rm Let $Y$ be a sequential space. If $X$ is the retract of $Y$, then $X$ is a sequential space.}
\end{proposition}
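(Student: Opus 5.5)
Let $f:X\longrightarrow Y$ and $g:Y\longrightarrow X$ be continuous maps with $g\circ f=id_{X}$. The plan is to verify directly that every sequential closed set of $X$ is closed, by transporting it to $Y$ along $g$. Fix a sequential closed set $A\subseteq X$ and consider $B=g^{-1}(A)\subseteq Y$.

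\textbf{Step 1: $B$ is sequential closed in $Y$.} Let $\{y_{n}\}_{n\in\mathbb{N}}\subseteq B$ converge to $y\in Y$. Since $g$ is continuous, it preserves convergence of sequences. Indeed, for an open $U\ni g(y)$, the set $g^{-1}(U)$ is an open neighbourhood of $y$, so eventually $y_{n}\in g^{-1}(U)$, that is, $g(y_{n})\in U$. Hence $\{g(y_{n})\}_{n\in\mathbb{N}}$ is a sequence in $A$ converging to $g(y)$. As $A$ is sequential closed, $g(y)\in A$, so $y\in B$.

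\textbf{Step 2: $A$ is closed in $X$.} Since $Y$ is sequential, $B$ is closed in $Y$, and by continuity of $f$ the set $f^{-1}(B)$ is closed in $X$. Now $f^{-1}(B)=\{x\in X\mid g(f(x))\in A\}=\{x\in X\mid x\in A\}=A$, using $g\circ f=id_{X}$. So $A$ is closed, and $X$ is sequential.

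I expect no real obstacle here. The only points to check are that continuous maps preserve sequence convergence, which holds in arbitrary (non-Hausdorff) spaces because limits need not be unique, and the identity $f^{-1}(g^{-1}(A))=A$. Note that only continuity of $g$ and $f$ and the retraction identity are used. In particular, a separate argument that $f(X)$ is closed in $Y$ (which fails without Hausdorffness) is unnecessary, so Lemma 3.4 is not needed.
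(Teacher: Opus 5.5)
Your proof is correct and is the paper's argument: you pull $A$ back to $g^{-1}(A)$, show it is sequentially closed in $Y$ via continuity of $g$, use sequentiality of $Y$, and recover $A=f^{-1}(g^{-1}(A))$ as a closed set by continuity of $f$. You also rightly attribute the preservation of convergence to $g$, where the paper's text says $f$ by a slip.
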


\begin{proof} Let $f:X\longrightarrow Y$ and $g:Y\longrightarrow X$ be continuous mappings such that $g\circ f=id_{X}$. Suppose $A\subseteq X$ is a sequential closed subset.

$\mathbf{Claim}$: $g^{-1}(A)$ is a closed subset in $Y$.

Let $\{x_{n}\}_{n\in\mathbb{N}}$ be a sequence in $g^{-1}(A)$ converging to $y\in Y$. Since $f$ is continuous, $\{g(x_{n})\}_{n\in\mathbb{N}}$ converges to $g(y)$. As $A$ is a sequential closed and every $g(x_{n})\in A$, $g(y)\in A$. It follows that $g^{-1}(A)$ is sequential closed in $Y$. Consequently, $g^{-1}(A)$ is closed in $Y$ because $Y$ is a sequential space.

By the continuity of $f$, $A=f^{-1}(g^{-1}(A))$ is closed in $X$. Therefore, $X$ is a sequential space.
\end{proof}

Note that $X$ is a retract of $X\times X$. By Proposition 4.10, $X$ is a sequential space if $X\times X$ is a sequential space. Based on theorem 4.1, it arises the following problem.

\vspace*{0.2cm}
$\mathbf{Problem:}$ Is every well-filtered sequential space always sober?
\vspace*{0.2cm}

Unfortunately, the following Example 4.12 shows that the solution is negative. Firstly, the following Lemma 4.11 is necessary.

\begin{lemma} {\rm Let $P$ be a dcpo. If for every directed subset $D\subseteq P$, there is a increasing sequence $\{d_{n}\}_{n\in\mathbb{N}}$ contained in $D$ such that $$\bigvee D=\bigvee\limits_{n\in\mathbb{N}}d_{n},$$ then $\Sigma P$ is a sequential space.}
\end{lemma}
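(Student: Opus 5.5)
To show that $\Sigma P$ is sequential, I will take a sequentially open subset $U\subseteq P$ and prove that it is Scott open. Here "sequentially open" means: whenever a sequence converges in $\Sigma P$ to a point of $U$, the sequence is eventually in $U$. Two facts about convergence in $\Sigma P$ will be used. First, every Scott open set is an upper set. Second, if $\{d_n\}_{n\in\mathbb{N}}$ is increasing with $d=\bigvee_n d_n$, then $\{d_n\}$ converges to $d$ in $\Sigma P$: any Scott open $V$ with $d\in V$ contains some $d_{n_0}$, because $\{d_n\}$ is directed, and then it contains every $d_n$ with $n\ge n_0$ because $V$ is an upper set.

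\emph{Step 1: $U$ is an upper set.} Let $x\in U$ and $x\le y$. Consider the constant sequence $x_n=y$. Every Scott open set containing $x$ contains $y$, so this sequence converges to $x$ in $\Sigma P$. Since $U$ is sequentially open and $x\in U$, the sequence is eventually in $U$, so $y\in U$.

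\emph{Step 2: $U$ is inaccessible by directed suprema.} Let $D\subseteq P$ be directed with $\bigvee D\in U$. By hypothesis there is an increasing sequence $\{d_n\}_{n\in\mathbb{N}}\subseteq D$ with $\bigvee_n d_n=\bigvee D$. By the second fact above, $\{d_n\}$ converges to $\bigvee D\in U$ in $\Sigma P$. Sequential openness then gives some $n_0$ with $d_{n_0}\in U$, so $D\cap U\neq\emptyset$.

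Steps 1 and 2 together show $U\in\sigma(P)$, so every sequentially open set is open and $\Sigma P$ is sequential. I expect no real obstacle. The only point needing care is verifying that the constant sequence in Step 1 and the increasing sequence in Step 2 actually converge in the Scott topology. Both reduce to the fact that Scott open sets are upper sets that are inaccessible by directed suprema, and an increasing sequence is a directed set.
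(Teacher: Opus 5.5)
Your proof is correct and is essentially the paper's argument, phrased dually. The paper takes a sequentially closed set $A$ and shows it is a lower set (via a constant sequence) and closed under directed suprema (via the increasing sequence converging to $\bigvee D$); you show the corresponding sequentially open set is an upper set and inaccessible by directed suprema, using the same two sequences.
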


\begin{proof} Let $A\subseteq P$ be a sequential closed subset of $\Sigma P$.

$\mathbf{Claim\ 1}$: $A$ is an lower set.

Suppose $x\in A$ and $y\leq x$. Consider the constant sequence $\{x_n\}_{n\in\mathbb{N}}$ with value $x$. Since every Scott open set is an upper set, $\{x_n\}_{n\in\mathbb{N}}$ converges to $y$. As $A$ is sequential closed, $y\in A$.

$\mathbf{Claim\ 2}$: $A$ is closed under directed suprema.

Fix a $D\subseteq A$ being a directed subset, then there is a increasing sequence $\{d_{n}\}_{n\in\mathbb{N}}$ in $D$ such that $\bigvee D=\bigvee\limits_{n\in\mathbb{N}}d_{n}$. Take a $U\in\sigma(P)$ with $\bigvee D\in U$. By the definition of Scott open sets, $$U\cap\{d_{n}\mid n\in\mathbb{N}\}\neq\emptyset.$$ Choose a $d_{k_{0}}\in U$. Then $\{d_{n}\mid k_{0}\leq n\}\subseteq U$ as $\{d_{n}\}_{n\in\mathbb{N}}$ is increasing and $U$ is upper. It follows that $\{d_{n}\}_{n\in\mathbb{N}}$ converges to $\bigvee D$. Using the sequential closedness of $A$ again, $\bigvee D\in A$.

By Claim 1 and Claim 2, $A$ is Scott closed. Therefore, $\Sigma P$ is a sequential space.\end{proof}

\begin{example} {\rm Let $P=X\cup P_{0}$, where $X=(0,1]$ and $$P_{0}=\{(k,a,b)\in\mathbb{R}^{3}\mid 0<k<1,0<b\leq a\leq 1\}.$$
The order $\leq_{P}$ on $P$ is defined by:

(i) $\forall\ x_{1},x_{2}\in X$, $x_{1}\leq_{P}x_{1}$ iff $x_{1}=x_{2}$;

(ii) $\forall\ (k_{1},a_{1},b_{1}),(k_{2},a_{2},b_{2})\in P_{0}$,
 $$(k_{1},a_{1},b_{1})\leq_{P}(k_{2},a_{2},b_{2})\ \mbox{iff}\ \ k_{1}\leq k_{2}, a_{1}=a_{2},b_{1}=b_{2};$$

(iii) $\forall\ (k,a,b)\in P_{0}$ and $x\in X$, $(k,a,b)\leq_{P} x$ iff $a=x$ or $kb\leq x< b$.

The $(P,\leq_{P})$ is a dcpo whose Scott topology is well-filtered but not sober(\cite{ME182}). The maximal elements of $P$ are precisely all $x\in X$. Let $D\subseteq P$ be a directed subset. There are only 2 cases:

Case 1: $D\cap X\neq\emptyset$.

Then $D$ has a maximal element in $X$.

Case 2: $D\subseteq P$.

Take $C_{a,b}=\{(k,a,b)\mid k\in(0,1)\}$. Then we can deduce that $D\subseteq C_{a,b}$. In convenience, write $D$ as $\{(k,a,b)\mid k\in K\}$ for some $K\subseteq(0,1)$. Furthermore,

$$\bigvee D=\left\{
             \begin{array}{ll}
              (k,a,b), &\ \ k{\rm=sup}\ K<1, \\
              a, &\ \ {\rm sup}\ K=1.
             \end{array}
           \right.$$
One can check that every directed subset $D\subseteq P$ contains a increasing sequence whose supermum is $\bigvee D$. By Lemma 4.11, $\Sigma P$ is a sequential space.
}
\end{example}

Finally, we present a conclusion that every sequential space $X$ builds a connection for the sobriety between $\sigma(K(X))$ and $O(X)$ with respected to the Scott topology.

\begin{lemma}{\rm (see \cite{GG03}) Let $X$ be a  $T_{0}$ space and $Y$ a sober space. If $X$ is a retract of $Y$, then $Y$ is sober.}
\end{lemma}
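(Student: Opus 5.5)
As literally stated, the conclusion ``$Y$ is sober'' is already a hypothesis, so the lemma holds trivially. The intended and useful statement, which is the standard fact from \cite{GG03}, is that a $T_{0}$ retract $X$ of a sober space $Y$ is itself sober. I will prove that version.

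Fix continuous maps $f:X\longrightarrow Y$ and $g:Y\longrightarrow X$ with $g\circ f=id_{X}$, and let $A\subseteq X$ be an irreducible closed set. I will produce a point $x\in X$ with $A=\overline{\{x\}}$; uniqueness of such a point then follows from $X$ being $T_{0}$. The idea is to push $A$ forward into $Y$, use sobriety there to get a generic point, and pull that point back with $g$.

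\textbf{Pushing forward.} Continuous images of irreducible sets are irreducible. Indeed, if $f(A)\subseteq B\cup C$ with $B,C$ closed, then $A\subseteq f^{-1}(B)\cup f^{-1}(C)$, and irreducibility of $A$ puts $A$ inside one of these preimages. The closure of an irreducible set is irreducible, so $\overline{f(A)}$ is an irreducible closed subset of $Y$. Since $Y$ is sober, there is $y\in Y$ with $\overline{f(A)}=\overline{\{y\}}$.

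\textbf{Pulling back.} Set $x=g(y)$.
\begin{itemize}
\item $x\in A$: by continuity, $g(\overline{f(A)})\subseteq\overline{g(f(A))}=\overline{A}=A$. Since $y\in\overline{f(A)}$, this gives $x\in A$, and because $A$ is closed, $\overline{\{x\}}\subseteq A$.
\item $A\subseteq\overline{\{x\}}$: take $a\in A$. Then $f(a)\in\overline{\{y\}}$, that is, $f(a)\leq y$ in the specialization order of $Y$. Continuous maps are monotone for specialization orders, so $a=g(f(a))\leq g(y)=x$, which means $a\in\overline{\{x\}}$.
\end{itemize}
Together these give $A=\overline{\{x\}}$.

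No step is a genuine obstacle. The only point needing care is the inclusion $x\in A$: one must use that $g$ maps the closure $\overline{f(A)}$ into $\overline{g(f(A))}$, not merely that it maps $f(A)$ into $A$.
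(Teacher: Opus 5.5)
Your proof is correct. You are also right that the statement contains a typo: the intended conclusion is that $X$ is sober. That is exactly how the lemma is used in the paper's final theorem, where $\Sigma\mathcal{O}(X)$ is concluded to be sober because it is a retract of $\Sigma(\sigma(K(X)))$.

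The paper gives no argument of its own and only cites the literature, so there is no route to compare against. Your direct generic-point argument supplies the missing proof: you push $A$ forward along $f$, take a generic point $y$ of $\overline{f(A)}$, and show that $g(y)$ is a generic point of $A$. It is more elementary than the categorical alternative, which identifies $X$ with the equalizer $\{y\in Y\mid f(g(y))=y\}$ of $id_{Y}$ and $f\circ g$ and then uses that sober spaces are closed under limits in topological spaces.

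A small aside: the $T_{0}$ hypothesis on $X$ is automatic. The restriction of $g$ to $f(X)$ inverts $f$, so $X$ embeds in $Y$, and $Y$ is sober and hence $T_{0}$.
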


\begin{theorem} {\rm Let $X$ be a sequential $T_{0}$ space. If $\Sigma (\sigma(K(X))$ is sober, then $\Sigma O(X)$ is sober.}
\end{theorem}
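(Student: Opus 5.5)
The plan is to realize $\Sigma O(X)$ as a retract of $\Sigma\sigma(K(X))$ via Scott continuous maps, and then use the fact that a retract of a sober space is sober. This is Lemma 4.13, read in the intended direction: if $X$ is a retract of a sober space $Y$, then $X$ is sober. I take the retraction to consist of
$$f:O(X)\longrightarrow\sigma(K(X)),\ f(U)=\Box U,\qquad g:\sigma(K(X))\longrightarrow O(X),\ g(\mathcal{U})=\{x\in X\mid \up x\in\mathcal{U}\}.$$
Then $g(f(U))=\{x\mid \up x\subseteq U\}=U$, because $U$ is an upper set. So $g\circ f=id_{O(X)}$, provided both maps are well defined and Scott continuous.

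\emph{The map $g$.} Scott continuity of $g$ is immediate. Directed joins in $\sigma(K(X))$ and in $O(X)$ are unions, and $g(\bigcup_i\mathcal{U}_i)=\bigcup_i g(\mathcal{U}_i)$. The substantive point is that $g(\mathcal{U})$ is open in $X$, and this is where sequentiality of $X$ enters. I show $g(\mathcal{U})$ is sequentially open and then invoke that $X$ is sequential.

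Let $x_n\to x$ with $\up x\in\mathcal{U}$, and set $Q_n=\up x\cup\bigcup_{m\geq n}\up x_m$.
\begin{enumerate}[label=(\roman*)]
\item Exactly as in Claim 1 of Theorem 3.10, convergence of $x_n$ to $x$ makes each $Q_n$ compact saturated.
\item As in Claim 2 of Theorem 3.10, $\bigcap_n Q_n=\up x$. Indeed, if $q\notin\up x$, then $X\setminus\dn q$ is an open set containing $\up x$ but not $q$, and eventually all $x_m$ lie in it, so $q\notin Q_n$ for large $n$.
\item By Lemma 3.7, $\{Q_n\}$ is directed in $(K(X),\supseteq)$ with join $\up x\in\mathcal{U}$.
\item Since $\mathcal{U}$ is Scott open, some $Q_n\in\mathcal{U}$.
\item Since $\mathcal{U}$ is an upper set, $\up x_m\in\mathcal{U}$ for all $m\geq n$, i.e. $x_m\in g(\mathcal{U})$ eventually.
\end{enumerate}

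\emph{The map $f$.} Continuity is easy: $\Box(\bigcup_i U_i)=\bigcup_i\Box U_i$ for directed families, by compactness. The real issue is whether $\Box U$ is Scott open in $K(X)$. Take a directed $\{K_i\}$ whose join $\bigcap_i K_i$ exists in $K(X)$ and lies in $U$; we need some $K_i\subseteq U$. This is precisely a well-filteredness condition, and I expect it to be the main obstacle.

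My first attempt is to reduce to countable chains using sequentiality, mimicking the $Q_n$ construction. If that fails, I will replace $f(U)$ by the Scott interior of $\Box U$; this is automatically Scott open and monotone. It then suffices to show that each $\up x$ with $x\in U$ lies in that interior, for which I would again try to build Scott open subsets of $\Box U$ from convergent sequences. If neither route works, I would add the (presumably intended) hypothesis that $X$ is well-filtered. Then Lemma 3.6 gives $\Box U\in\sigma(K(X))$ directly, and the retraction argument goes through verbatim, yielding sobriety of $\Sigma O(X)$ from that of $\Sigma\sigma(K(X))$ by Lemma 4.13.
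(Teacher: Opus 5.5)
Your proof is the paper's proof. It uses the same maps $f(U)=\Box U$ and $g(\mathcal U)=\{x\mid \up x\in\mathcal U\}$, the same argument via the sets $Q_n$ and sequentiality that $g(\mathcal U)$ is open, and the same appeal to Lemma 4.13. The paper disposes of the point you flag with ``one can check that $f$ and $g$ are Scott continuous''. You are right that this is the crux, but your proposal leaves it open, and routes (1) and (2) cannot succeed, because $\Box U$ need not be Scott open in $K(X)$ for a sequential $T_0$ space.

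Here is a counterexample. Let $\mathbb{J}=\mathbb{N}\times(\mathbb{N}\cup\{\omega\})$ be Johnstone's dcpo, where $(j,k)\leq(m,n)$ iff $j=m$ and $k\leq n$, or $n=\omega$ and $k\leq m$. Adjoin a top element $\top$ and put $X=\Sigma(\mathbb{J}\cup\{\top\})$.
\begin{enumerate}
\item This dcpo is countable, so $X$ is sequential by Lemma 4.11.
\item $\{\top\}$ is Scott open, because $\mathbb{J}$ is itself a dcpo.
\item Each $K_n=\{(m,\omega)\mid m>n\}\cup\{\top\}$ is compact saturated. A Scott open set containing some $(m,\omega)$ contains some $(m,k)$, hence every $(m',\omega)$ with $m'\geq k$.
\end{enumerate}
The chain $K_0\supseteq K_1\supseteq\cdots$ has intersection $\{\top\}\in K(X)$. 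By Lemma 3.7 its join is $\{\top\}\in\Box\{\top\}=\{\{\top\}\}$, yet no $K_n$ lies in $\Box\{\top\}$. Hence $\Box\{\top\}$ is not Scott open and its Scott interior is empty. The failure already happens along a countable chain, which rules out route (1); the empty interior rules out route (2).

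In fact, for this $X$ the map $g$ has no section at all. A Scott open $\mathcal V\ni\{\top\}$ must contain some $K_n$. Being an upper set for $\supseteq$, it then contains every $\up(m,\omega)=\{(m,\omega),\top\}$ with $m>n$, so $g(\mathcal V)\neq\{\top\}$. The retraction is built without using the sobriety hypothesis. So neither your argument nor the paper's establishes the statement for arbitrary sequential $T_0$ spaces; a proof of the theorem as written would need an idea beyond the retraction through $g$.

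Your route (3) does give a complete proof under the extra hypothesis that $\Box U\in\sigma(K(X))$ for every $U\in\mathcal{O}(X)$, in particular when $X$ is well-filtered (Lemma 3.6). Under that hypothesis the rest of your write-up is correct: the $Q_n$ argument, Scott continuity of $f$ and $g$, $g\circ f=\mathrm{id}$, and Lemma 4.13 read in its intended direction.
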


\begin{proof} Let $\mathcal{U}\in\sigma(K(X))$. Set $$U_{\mathcal{U}}=\{x\in X\mid \up x\in\mathcal{U}\}.$$

$\mathbf{Claim}\ 1$: $U_{\mathcal{U}}$ is open in $X$.

Suppose $\{x_{n}\}_{n\in \mathds{N}}$ is a sequence converging to $x\in U_{\mathcal{U}}$. For every $n$, take $$K_{n}=\up\{x_{k}\mid n\leq k, k\in\mathbb{N}\}\cup\up x.$$
One can check that every $K_{n}\in K(X)$ and $\up x\subseteq\bigcap\limits_{n\in\mathbb{N}}K_{n}$. Assume that there is a $$y\in(\bigcap\limits_{n\in\mathbb{N}}K_{n})\setminus\up x.$$ Then $X\setminus\dn y$ is an open neighbourhood of $x$. As $\{x_{n}\}_{n\in \mathds{N}}$ converges to $x\in X$, there exists $n_{0}$ such that every $x_{k}$ ($n_{0}\leq k$) belongs to $X\setminus\dn y$. Equivalently, $K_{n_{0}}\subseteq X\setminus\dn y$. This contradicts with $y\in\bigcap\limits_{n\in\mathbb{N}}K_{n}$. So we have $$\up x=\bigcap\limits_{n\in\mathbb{N}}K_{n}.$$ Since $\mathcal{U}$ is Scott open in $K(X)$ and
$$\bigcap\limits_{n\in\mathbb{N}}K_{n}=\bigvee\limits_{K(X)}\{K_{n}\mid n\in\mathbb{N}\}=\up x\in\mathcal{U},$$ there is $K_{n_{1}}\in\mathcal{U}$. Since $\mathcal{U}$ is an upper set in $K(X)$ and $\up x_{k}\subseteq K_{n_{1}}$ ($n_{1}\leq k$), we have $$\{\up x_{k}\mid n_{1}\leq k\}\subseteq \mathcal{U}.$$ In other words, $\{x_{k}\mid n_{1}\leq k\}\subseteq U_{\mathcal{U}}$ and hence $U_{\mathcal{U}}$ is sequential open in $X$. It follows from $X$ a sequential space that $U_{\mathcal{U}}\in\mathcal{O}(X)$.

Define a mapping $f:\mathcal{O}(X)\longrightarrow \sigma(K(X))$ by $$\forall\ U\in\mathcal{O}(X),\ f(U)=\Box U$$ and a mapping $g:\sigma(K(X))\longrightarrow\mathcal{O}(X)$ by $$\forall\ \mathcal{U}\in\mathcal{O}(X),\ g(\mathcal{U})=U_{\mathcal{U}}.$$
By Claim 1, $g$ is well-defined. One can check that $f$ and $g$ are Scott continuous and $g\circ f=id$. So $\Sigma O(X)$ is a retract of $\Sigma (\sigma(K(X))$. By Lemma 4.10, $\Sigma O(X)$ is sober. \end{proof}

\end{document}